\newtheorem{thm}{Theorem}[section]
\newtheorem{lem}[thm]{Lemma}
\newtheorem{rem}[thm]{Remark}
\newtheorem{defn}[thm]{Definition}
\newcommand{\R}{{\mathbb{R}}}
\newcommand{\D}{\hbox{D}}
\def\bbm[#1]{\mbox{\boldmath $#1$}}
\newcommand{\beq }{\begin{equation}}
\newcommand{\eeq }{\end{equation}}
\newcommand{\n}{\nabla}
\newcommand{\la}{\langle}
\newcommand{\ra}{\rangle}
\begin{document}

\parindent 0pc
\parskip 6pt
\overfullrule=0pt

\title[The strong comparison principle]{Harnack inequalities for quasilinear anisotropic elliptic equations with a first order term}

\author{Domenico Vuono$^{*}$}
\address{$^{*}$Dipartimento di Matematica e Informatica, UNICAL, Ponte Pietro  Bucci 31B, 87036 Arcavacata di Rende, Cosenza, Italy}
\email{domenico.vuono@unical.it}

\date{\today}

\date{\today}

\begin{abstract}
We consider weak solutions of the equation 
$$-\Delta_p^H u+a(x,u)H^q(\nabla u)=f(x,u) \quad \text{in } \Omega,$$
where $H$ is in some cases called Finsler norm, $\Omega$ is a domain of $\R^N$, $p>1$, $q\ge \max\{p-1,1\}$, and $a(\cdot,u)$, $f(\cdot,u)$ are functions satisfying suitable assumptions. We exploit the Moser iteration technique to prove a Harnack type  comparison inequality for solutions of the equation and a Harnack type inequality for solutions of the linearized operator. As a consequence, we deduce a Strong Comparison Principle for  solutions of the equation and a strong Maximum Principle for solutions of the linearized operator.

\end{abstract}

\keywords{Anisotropic $p$-Laplacian, Harnack inequalities, Strong Comparison Principle, Strong Maximum Priciple}
	
	\subjclass[2020]{35J62,35B50
35B51}.
	
\maketitle

\section{Introduction}
Let $\Omega$ be a domain in $\R^N$, with $N\ge 2$. In this work we consider weak $C^{1,\alpha}_{loc}(\Omega)$ solutions to the equation
\begin{equation}\label{eq:Euler-Lagrange}
-\Delta_p^H u+a(x,u)H^q(\nabla u)=f(x,u) \quad \text{in } \Omega,
\end{equation}
where $p>1$, $q\ge \max\{p-1,1\}$, $a(\cdot,u)$, $f(\cdot,u)$ are functions satisfying suitable assumptions (see $(h_p)$ below) and  
\begin{equation}\label{operatoranisotropic}
\Delta_p^H u:=\operatorname{div}(H^{p-1}(\nabla u)\nabla H(\nabla u))),
\end{equation}
where $\Delta_p^H$ is the so-called anisotropic $p$-Laplacian or Finsler $p$-Laplacian. We point out that $H$ is a Finsler type norm ($H$ satisfies assumptions $(h_H)$, see Section \ref{secpreliminare}). If $H$ is the standard Euclidean norm the operator $\Delta_p^H$ reduces to the classical $p$-Laplace operator. 

We exploit the Moser iteration technique to derive  Harnack inequalities for a class of operators involving the Finsler $p$-Laplace operator. In recent years, considerable attention has been given to qualitative properties of solutions to elliptic problems involving the anisotropic
$p$-Laplace operator. A selection of notable works includes \cite{AGF,anto,BMV,Niremberg,CRS,CianSal,CianSal2,CiFiRo,CiXi1,CiXi2,ERSV,EMSV,FSVuono1,MSP}. Let us mention that anisotropic operators are useful to describe several phenomena
in many applications (such as, for instance, material science
\cite{CaHo, Gu}, biology \cite{AnInMa}, image processing \cite{EsOs,
PeMa}).

Our problem is related to Trudinger's analysis of a degenerate class of operators in weighted Sobolev spaces, as discussed in \cite{Tru}.
In the Euclidean framework, in the papers \cite{DS2,SMS}, the weak and strong Harnack comparison inequality for solutions to the equation \eqref{eq:Euler-Lagrange} are deduced. In the same setting, in \cite{SMS,Montoro}, the authors prove a Harnack-type inequality for solutions to the linearized equation associated with the problem \eqref{eq:Euler-Lagrange} (see Definition \ref{linearize} below). For a more detailed analysis of problem \eqref{eq:Euler-Lagrange} in the case of the pure $p$-Laplacian, we refer to \cite{S2}. 

Here, we prove a Harnack type  comparison inequality for solutions of \eqref{eq:Euler-Lagrange}, a Harnack type inequality for solutions of the linearized operator, and exploit them to prove a Strong Comparison Principle for  solutions of the equation, as well as a strong Maximum Principle for solutions of the linearized operator.

As usual the weak formulation of \eqref{eq:Euler-Lagrange} is 
\begin{equation}\label{eq:debole}
\int_\Omega H^{p-1}(\nabla u) \langle \nabla H(\nabla u), \nabla
\varphi \rangle+a(x,u)H^q(\nabla u)\varphi\,dx\,=\,\int_\Omega f(x,u)\varphi\,dx, \quad\forall \varphi\in
C^1_c(\Omega)\,.
\end{equation}

It is well known that, by standard regularity results (see \cite{BMV,CFV,DB,GiTru,T}), solutions to problems involving the $\Delta_p^H(\cdot)$ operator (and under suitable assumptions) are in general of class $C^{1,\alpha}_{loc}(\Omega)\cap C^2(\Omega\setminus Z_u),$ where $Z_u$ denotes the set where the gradient vanishes.

Here and all the paper we suppose that the functions $a(x,s)$ and $f(x,s)$ satisfy the following set of assumptions
\begin{itemize}
\item[(h$_p$)]
\begin{enumerate}
 \item [(i)]$f(x,\cdot)$ is positive and, more precisely,  $f(x,s)>0$ in $\Omega'$ for every  $\Omega'\subset\subset \Omega$ and for every $s>0$.
 \item[(ii)]  $a(x,\cdot),f(x,\cdot)\in C^1( \Omega' \times [0,+\infty))$ for any $\Omega'\subset\subset\Omega$.
\item [(iii)] $a(x,\cdot)$ and $f(x,\cdot)$ are locally Lipschitz continuous, uniformly w.r.t. $x$. Namely, for every  $\Omega'\subset\subset \Omega$ and for every $M>0$,  there is a  positive constant $L=(M,\Omega')$ such that for every $x \in \Omega'$ and every $ u,v \in [0,M] $ we have:
\begin{equation}
    \vert a(x,u) - a(x,v) \vert \le L \vert u-v \vert, \qquad \vert f(x,u) - f(x,v) \vert \le L \vert u-v \vert. 
\end{equation} 

\end{enumerate}
\end{itemize}

Our first result is stated in the following

\begin{thm}\label{Harnack} 
Suppose that assumptions $(h_H)$ and $(h_p)$ hold.
Let $p>(2N+2)/(N+2)$ and let $u,v\,\in\,C^{1,\alpha}_{loc}(\Omega)$   solutions to  \eqref{eq:Euler-Lagrange} in $\Omega$, with
$q\geq\max\,\{p-1,1\}.$
Suppose that $\overline{B(x_0,6\delta)}\subset\Omega'\subset\subset \Omega$ for some $\delta>0$ and that $$u\leq v \quad \text{in}\quad {B(x_0,6\delta)}.$$
Then there exists  $C=C(p,q,\delta,L, \|v\|_{L^{\infty}(\Omega')},  \|\nabla u\|_{L^{\infty}(\Omega')},  \|\nabla v\|_{L^{\infty}(\Omega')})>0$ such that
\begin{equation}\label{HarnIneq}
\sup_{B(x_0,\delta)}(v-u)\leq C\inf_{B(x_0,2\delta)}(v-u).
\end{equation}
\end{thm}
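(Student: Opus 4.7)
The plan is to reduce the statement to a Harnack-type inequality for a linear, weighted elliptic equation satisfied by the nonnegative difference $w:=v-u$, and then to establish it by a Moser iteration adapted to the weight. For $t\in[0,1]$ set $u_t:=u+t(v-u)$. Subtracting the weak formulations \eqref{eq:debole} of \eqref{eq:Euler-Lagrange} for $u$ and $v$ and applying the fundamental theorem of calculus along the segment $\nabla u\to\nabla v$ yields
\[
H^{p-1}(\nabla v)\nabla H(\nabla v)-H^{p-1}(\nabla u)\nabla H(\nabla u)=A(x)\nabla w,\qquad A(x):=\int_0^1 D^2\!\left(\tfrac{H^p}{p}\right)(\nabla u_t)\,dt.
\]
Splitting $a(x,v)H^q(\nabla v)-a(x,u)H^q(\nabla u)$ as $a(x,v)[H^q(\nabla v)-H^q(\nabla u)]+[a(x,v)-a(x,u)]H^q(\nabla u)$, linearizing the first piece by the same device and using the Lipschitz assumption $(h_p)(iii)$ on $a$ for the second, and doing the same for $f(x,v)-f(x,u)$, one finds that $w\ge 0$ is a weak solution in $B(x_0,6\delta)$ of
\[
-\operatorname{div}(A(x)\nabla w)+\langle B(x),\nabla w\rangle+c(x)w=0,
\]
where $\|B\|_\infty$ and $\|c\|_\infty$ are controlled by $L$, $\|v\|_\infty$, $\|\nabla u\|_\infty$ and $\|\nabla v\|_\infty$.

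Next, the anisotropic assumptions $(h_H)$ on $H$ (positive $1$-homogeneity and strict convexity of $H^2$) imply $D^2(H^p/p)(\eta)\sim H^{p-2}(\eta)\,\mathrm{Id}$, so that
\[
\lambda\,\rho(x)|\xi|^2\le \langle A(x)\xi,\xi\rangle\le \Lambda\,\rho(x)|\xi|^2,\qquad \rho(x):=\int_0^1 H^{p-2}(\nabla u_t(x))\,dt,
\]
for every $\xi\in\R^N$. The linear equation for $w$ is thus uniformly elliptic only with respect to the weight $\rho$, which may degenerate or blow up on the critical set $Z_u\cap Z_v$. The hypothesis $p>(2N+2)/(N+2)$ is the standard Damascelli--Sciunzi threshold ensuring that $\rho^{-1}$ is sufficiently integrable and that a weighted Sobolev inequality of Trudinger's type
\[
\left(\int_B|\varphi|^{2\sigma}\,dx\right)^{1/(2\sigma)}\le C\left(\int_B\rho\,|\nabla\varphi|^2\,dx\right)^{1/2},\qquad \varphi\in C^1_c(B),
\]
holds with a Sobolev gain $\sigma=\sigma(p,N)>1$, in the spirit of \cite{Tru,DS2,SMS}.

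Finally, I would run the Moser iteration on $w_\e:=w+\e$ and let $\e\to 0^+$ at the end. Testing the linear equation with $\eta^2 w_\e^\beta$ and setting $\gamma:=(\beta+1)/2$ yields a Caccioppoli inequality of the form
\[
\int_B\rho\,|\nabla w_\e^{\gamma}|^2\eta^2\,dx\le C(\gamma)\int_B\rho\,w_\e^{2\gamma}|\nabla\eta|^2\,dx+\text{lower order},
\]
which together with the weighted Sobolev embedding gives a reverse H\"older chain in $\gamma$. Iterating on shrinking balls produces, for some $s_0>0$, a sup estimate of $w_\e$ on $B(x_0,\delta)$ by its $L^{s_0}$-average on $B(x_0,2\delta)$, and, for $\gamma<0$, a corresponding inf estimate by the $L^{-s_0}$-average; a John--Nirenberg lemma applied to $\log w_\e$ (whose BMO bound stems from the borderline choice $\beta=-1$) bridges the two averages, yielding \eqref{HarnIneq} after $\e\to 0^+$. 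The principal obstacle is the second step: since $A$ degenerates exactly on $Z_u\cap Z_v$, the entire iteration must be carried out in the weighted Sobolev spaces associated to $\rho$, keeping careful track of the constants' dependence on the data, and the lower bound on $p$ is precisely what keeps the weighted Sobolev inequality from collapsing.
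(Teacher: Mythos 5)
Your proposal is correct in spirit but takes a genuinely different route from the paper's. The key difference is in the first step: you linearize explicitly via the fundamental theorem of calculus, writing the difference of the two fluxes as $A(x)\nabla w$ with $A(x)=\int_0^1 D^2(H^p/p)(\nabla u_t)\,dt$ and likewise for the lower-order terms, so that $w=v-u$ satisfies a genuinely linear degenerate-elliptic equation $-\operatorname{div}(A\nabla w)+\langle B,\nabla w\rangle+cw=0$. The paper never passes to this linear equation; instead it tests the difference of the two weak formulations directly with $\phi_\tau=\eta^2 w_\tau^\beta$ and immediately invokes the anisotropic monotonicity and Lipschitz inequalities of Lemma~\ref{stimedamascellianisotrope}, namely \eqref{eq:inequalities1}--\eqref{eq:inequalities2}, to produce the weighted Caccioppoli estimate \eqref{quartaequazione} with weight $\rho=(|\nabla u|+|\nabla v|)^{p-2}$. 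The two weights (yours is $\int_0^1 H^{p-2}(\nabla u_t)\,dt$) are pointwise comparable for any $p>1$, so in practice both routes feed the same weighted Sobolev embedding (Theorem~\ref{Poincarepesata}). Your route is closer to the classical Damascelli--Sciunzi linearization philosophy and makes the role of the matrix $A$ explicit; the paper's route is more compact because the algebraic lemma packages the linearization and its ellipticity constants in one step and never requires discussing where the segment $\nabla u_t$ vanishes.

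A second, more substantial divergence concerns the crossover from positive to negative exponents. You propose bridging the $L^{s_0}$ and $L^{-s_0}$ averages by a John--Nirenberg estimate on $\log w_\e$, obtained from the borderline case $\beta=-1$. The paper deliberately avoids John--Nirenberg and uses Trudinger's refinement: after the $\beta=-1$ Caccioppoli estimate \eqref{peso} one normalizes $w_\tau$ to have $\int\log w_\tau=0$, applies the zero-mean weighted Poincar\'e inequality (Remark~\ref{zeromeans}) to bound $\|\log w_\tau\|_{L^\nu}$, and then iterates with the special test function $\phi=\eta^2 w_\tau^{-1}(|\log w_\tau|^\beta+(2\beta)^\beta)$ for $\beta\ge1$ (see \eqref{fi}--\eqref{barrafael}); this directly produces the crossover \eqref{secondostep} from weighted Sobolev alone. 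Your John--Nirenberg route can also work, but in this degenerate setting the weight $\rho$ is not a priori $A_2$, so you would need to justify that the $L^2$-mean-oscillation bound for $\log w_\e$ on all balls (derived from the weighted Poincar\'e inequality) still yields the exponential integrability that JN gives in the unweighted case. This is precisely the technical point Trudinger's device sidesteps, and it is why the paper states explicitly that it ``avoids using the John--Nirenberg Lemma.'' If you want to keep your JN-based bridge you should either verify the doubling/integrability needed, or switch to the $\log$-moment test functions as in the paper.
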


The iterative technique that we use to prove Theorem \ref{Harnack} is due to Moser \cite{MO}.  It was initially developed by \cite{DEG,Nash} to establish Hölder continuity properties of solutions to certain strictly elliptic linear operators. This method was later extended to study degenerate operators, as seen in \cite{serrin}. Actually the method that we use is the one developed by Trudinger in \cite{Tru} to study a degenerate class of operators in weighted Sobolev spaces. We refer to \cite{DS2,SMS,Montoro} for an appropriate use of this technique in the case of the pure $p$-Laplacian.

An important conseguence of the Harnack comparison inequality is in fact the strong comparison principle for the equation \eqref{eq:Euler-Lagrange}.

\begin{thm}[Strong Comparison Principle]\label{Principio forte}
Let $p>(2N+2)/(N+2)$ and  $u,v\in C^{1,\alpha}_{loc}(\Omega)$ with either $u$ or $v$  weak solution to~\eqref{eq:Euler-Lagrange}. Assume that
$ q\geq\max\,\{p-1,1\}$ and assume that $H$, $f(x,s), a(x,s)$ fulfill  $(h_H)$ and $(h_p)$. Then, if
\begin{equation}\nonumber
-\Delta_p^Hu+a(x,u)H^q(\nabla u)-f(x,u)\leq -\Delta_p^Hv+a(x,v)H^{q}(\nabla v)-f(x,v),\quad u\leq v \quad  \text{in}\quad {\Omega}
\end{equation}
in the weak distributional meaning, it follows that
$$u<v \quad \text{in} \quad \Omega$$
unless $u\equiv v$ in $\Omega$.
\end{thm}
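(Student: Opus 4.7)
The strategy is to set $w := v - u$, which is nonnegative on $\Omega$ by hypothesis, and to examine the zero set $Z := \{x \in \Omega : w(x) = 0\}$. The goal is to show that $Z$ is simultaneously closed and open in $\Omega$: since $\Omega$ is connected, this forces either $Z = \emptyset$ (i.e., $u < v$ everywhere) or $Z = \Omega$ (i.e., $u \equiv v$), which is precisely the dichotomy asserted. Closedness of $Z$ is immediate from the continuity of $u$ and $v$, so the entire work is in establishing openness, and the only tool needed for this is the Harnack comparison inequality of Theorem~\ref{Harnack}.

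To prove that $Z$ is open, I fix an arbitrary $x_0 \in Z$ and pick $\delta > 0$ so small that $\overline{B(x_0, 6\delta)} \subset \Omega' \subset\subset \Omega$. Applying \eqref{HarnIneq} on this ball produces a constant $C > 0$ (depending on $p$, $q$, $\delta$, $L$, and the $L^\infty$ norms of $v$ and of $\nabla u, \nabla v$ on $\Omega'$) such that
\[
\sup_{B(x_0,\delta)} w \;\le\; C\,\inf_{B(x_0,2\delta)} w.
\]
Since $w(x_0) = 0$ and $w \ge 0$, the right-hand side vanishes, so $w \equiv 0$ on $B(x_0,\delta)$. Thus $B(x_0,\delta) \subset Z$, proving openness and completing the connectedness argument.

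The delicate point I expect to be the main obstacle is that Theorem~\ref{Harnack} is formulated for two genuine solutions $u,v$ of~\eqref{eq:Euler-Lagrange}, while the present hypothesis only supplies the one-sided differential inequality $-\Delta_p^H u + a(x,u)H^q(\nabla u) - f(x,u) \le -\Delta_p^H v + a(x,v)H^q(\nabla v) - f(x,v)$, with only one of $u, v$ being a solution. To bridge this gap one has to inspect the proof of Theorem~\ref{Harnack}: its Moser-iteration scheme actually produces a Harnack inequality for $w = v-u$ from a one-sided inequality, obtained by subtracting the two relations and linearising the nonlinear vector field $H^{p-1}(\xi)\nabla H(\xi)$ along the segment between $\nabla u$ and $\nabla v$ via the mean-value theorem, while the lower-order terms involving $a$ and $f$ are controlled by the Lipschitz assumption $(h_p)(iii)$. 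The sign in the hypothesis is precisely the one that makes $w$ a weak supersolution of the resulting linearised operator, so the iteration of Theorem~\ref{Harnack} applies verbatim and the conclusion above is legitimate.
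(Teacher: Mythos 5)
Your overall architecture matches the paper's: define $Z=\{w=0\}$ with $w=v-u$, note $Z$ is closed by continuity, show $Z$ is open via a Harnack-type estimate, and conclude by connectedness. The error is in the choice of Harnack estimate and in the attempt to justify it. You invoke Theorem~\ref{Harnack} (the full comparison Harnack $\sup\le C\inf$), whose statement requires \emph{both} $u$ and $v$ to be weak solutions, and then try to argue that its proof "applies verbatim" under the one-sided hypothesis because $w$ becomes a supersolution of the relevant linearised problem. That bridge does not hold: Theorem~\ref{Harnack} is obtained by combining Theorem~\ref{t1} (which uses the inequality in one direction) with Theorem~\ref{pro:p2} (which uses it in the \emph{opposite} direction — compare~\eqref{disuguaglianzaipotesi} with~\eqref{eq:ekinotiziap}). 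A one-sided inequality makes $w$ a supersolution, and supersolutions satisfy only the $\inf$-side, not the $\sup$-side: the local boundedness estimate of Theorem~\ref{pro:p2} is the sub-solution half, and it is exactly the half you do not have.

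The fix is to do what the paper does: cite the weak Harnack comparison inequality, Theorem~\ref{t1}, which is stated precisely under the one-sided hypothesis with only one of $u,v$ a solution. Since $x_0\in Z$ gives $\inf_{B(x_0,\delta)}w=0$, Theorem~\ref{t1} yields $\|w\|_{L^s(B(x_0,2\delta))}=0$, hence $w=0$ a.e.\ in $B(x_0,2\delta)$, and by continuity $w\equiv 0$ there. This establishes openness without ever needing the $\sup$-bound, and the rest of your connectedness argument goes through unchanged. Your instinct that there was a hypothesis mismatch was correct; the resolution is to downgrade from the full Harnack to the weak Harnack, not to try to upgrade the hypothesis to make the full Harnack apply.
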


The strong comparison principle for $p$-Laplace equation and for the linearized equation is a complex matter and manly still unsolved. Actually it is not hard to derive the strong comparison principle far from the critical set, see e.g. \cite{FMS,tex2} but it remains an open problem already for $p$-harmonic functions near critical points. Our results significantly depend on examining regions where the gradient of the solutions vanishes. This approach is particularly motivated by its applications, especially in exploring the qualitative properties of solutions. Notably, the works \cite{FMRS,MMIS} highlight how the strong principle can simplify proofs and lead to improved results.

In the last part of this paper, our main effort is to obtain such a Harnack type inequality for solutions to the linearized equation  \eqref{linearizzato} (see Definition \ref{linearize} below), under suitable general assumptions. 

Let us recall the following: 

\begin{defn}[\cite{CES1,CDS,DS1}]
Let $\Omega$ be a bounded smooth domain in $\R^N$. For $\rho\in L^1(\Omega)$, we define the weighted Sobolev space
$H_{\rho}^{1,2}(\Omega)$ as the completion of 
$C^{\infty}(\overline \Omega)$ respect to the norm $$\|\psi\|_{H^{1,2}_{\rho}}=\left(\int_\Omega|\psi|^2\right)^{\frac{1}{2}}+\left(\int_\Omega\rho|\nabla \psi|^2\right)^{\frac{1}{2}}.$$ 

The space $H_{\rho,0}^{1,2}(\Omega)$ is the closure of $C_c^1(\Omega)$ in $H_{\rho}^{1,2}(\Omega).$ 
The space $H_{\rho,loc}^{1,2}(\Omega)$ is defined
accordingly.
\end{defn}

\begin{defn}\label{linearize}
Let $u\in C^{1,\alpha}_{loc}(\Omega)$ be a weak solution to \eqref{eq:Euler-Lagrange} and $v,\varphi\in H_{\rho,loc}^{1,2}(\Omega)$, with support $\varphi \subset \subset \Omega$, with $\rho=|\nabla u|^{p-2}$.     
For any $i=1,...,n$, the linearized operator $L_u^i$ is defined by 
\begin{equation}\label{linearizzato}
\begin{split}
     L^i_u(v,\varphi)   :=(p-1)&\int_\Omega  H^{p-2}(\nabla u) \langle \nabla H(\nabla u),\nabla v\rangle \langle \nabla H(\nabla u),\nabla \varphi \rangle \\ +&\int_\Omega H^{p-1}(\nabla u)\langle D^2 H(\nabla u)\nabla v,\nabla \varphi \rangle + q\int_\Omega  H^{q-1}(\nabla u)\langle \nabla H(\nabla u),\nabla v\rangle a(x,u) \varphi \\ +&\int_\Omega \left(a_{x_i}(x,u)H^q(\nabla u)-f_{x_i}(x,u)\right)\varphi +\int_\Omega \left(a_{s}(x,u)H^q(\nabla u)-f_s(x,u)\right)v\varphi.
    \end{split}
\end{equation}

Moreover $v\in H^{1,2}_{\rho,loc}(\Omega)$ is a weak solution of the $i-$linearized operator if 
\begin{equation}\label{soluzionelinearizzato}
L_u^i(v,\varphi)=0
\end{equation}

for any $\varphi\in H_{\rho}^{1,2}(\Omega)$, with support $\varphi\subset\subset \Omega$.
More generally, $v\in H_{\rho,loc}^{1,2}(\Omega)$ is a weak supersolution (subsolution) of \eqref{soluzionelinearizzato} if $L_u(v,\varphi)\ge 0 (\le 0)$ for any nonnegative $\varphi\in H_\rho^{1,2}(\Omega)
,$ with support $\varphi\subset\subset \Omega$.

\end{defn}

\begin{rem}
    We emphasize that the quantities involved in the definition of supersolution (subsolution) might not be well-defined on the critical set $Z_u$, in the case $1<p\le 2$. Theorem \ref{local1} ensures that the Lebesgue measure of $Z_u$
  is zero, implying that the integrals in \eqref{linearizzato} are well-defined over the entire domain $\Omega$. Moreover, Theorem \ref{local1} also ensures that $\rho=|\nabla u|^{p-2}\in L^1(\Omega)$
 under the assumptions stated in the following results.
\end{rem}

We have the following result:
\begin{thm}\label{Harnacklinearizzo}
     Let $p> (2N+2)/(N+2)$ and $u\in C^{1,\alpha}_{loc}(\Omega)$ be a weak solution to~\eqref{eq:Euler-Lagrange}. Assume that
$ q\geq\max\,\{p-1,1\}$ and suppose that $H$, $f(x,s), a(x,s)$ fulfill $(h_H)$ and $(h_p)$.
Suppose that $\overline {B(x_0,6\delta)}\subset \Omega'\subset\subset \Omega$. 
If $v\in H^{1,2}_{\rho,loc}(\Omega)\cap L^{\infty}_{loc}(\Omega)$ is a nonnegative solution of \eqref{soluzionelinearizzato}, then there exist $C=C(p,q,L,H,\delta,\| u\|_{L^{\infty}_{loc}( \Omega)},\|\nabla  u\|_{L^{\infty}_{loc}( \Omega)})>0$ such that 

\begin{equation}\label{Harnacklinearintro}
\sup_{B(x_0,\delta)} v\le C \left(\inf_{B(x_0,2\delta)} v + \|g\|_{L^{\infty}(B(x_0,6\delta))}\right)
\end{equation}

where $g:=\delta ^{p}( f_{x_i}(x, u)- a_{x_i}(x,u)H^q(\nabla u))$.
\end{thm}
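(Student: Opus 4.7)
My plan is a weighted Moser iteration in the spirit of Trudinger \cite{Tru} and of its adaptation to the Euclidean $p$-Laplacian in \cite{DS2,SMS,Montoro}. The first step is to recast \eqref{soluzionelinearizzato} as a weighted linear elliptic problem with source. Using $L_u^i(v,\varphi)=0$, I would move the term $\int_\Omega\bigl(a_{x_i}(x,u)H^q(\nabla u)-f_{x_i}(x,u)\bigr)\varphi\,dx$ to the right-hand side, producing a source pointwise bounded by $\|g\|_{L^\infty(B(x_0,6\delta))}/\delta^p$. By the assumptions $(h_H)$ on the Finsler norm $H$, the first two lines of \eqref{linearizzato} provide a principal part pointwise equivalent to $\rho(x)|\nabla v|^2$ with weight $\rho=H^{p-2}(\nabla u)\sim|\nabla u|^{p-2}$, which is of Muckenhoupt type thanks to the $L^1_{loc}$ bound provided by Theorem \ref{local1} (whose hypothesis $p>(2N+2)/(N+2)$ is exactly the one we assume).

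The second step is a weighted Caccioppoli estimate on the shifted function $\bar v := v + \|g\|_{L^\infty(B(x_0,6\delta))}$. Testing the equation with $\varphi=\eta^2 \bar v^{\beta}$ for $\eta\in C_c^1(B(x_0,6\delta))$ and $\beta\neq -1$, and using Young's inequality to absorb the first-order term $q\int H^{q-1}(\nabla u)\langle \nabla H(\nabla u),\nabla v\rangle a(x,u)\varphi\,dx$ and the zero-order term $\int(a_s H^q-f_s)v\varphi\,dx$ into the leading quadratic term, should yield
\begin{equation*}
\int \rho(x)\bigl|\nabla \bar v^{(\beta+1)/2}\bigr|^2 \eta^2\,dx \leq C(\beta)\int \rho(x)\,\bar v^{\beta+1}\bigl(|\nabla\eta|^2+\eta^2\bigr)\,dx.
\end{equation*}
The source $g$ contributes integrals of the form $\int (|g|/\delta^p)\eta^2\bar v^\beta\,dx$, which are controlled by $\int \bar v^{\beta+1}\eta^2/\delta^p\,dx$ via the shift $\bar v\geq\|g\|_{L^\infty}$ and are thus absorbed into the right-hand side.

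Combined with the weighted Sobolev inequality valid for $\rho\in L^1_{loc}$, this Caccioppoli estimate can be iterated along a sequence of shrinking concentric balls. For positive exponents $\beta>0$ this yields an upper bound $\sup_{B(x_0,\delta)}\bar v \leq C\,\|\bar v\|_{L^{\beta_0}_{\rho}(B(x_0,3\delta))}$, while negative exponents $\beta<-1$ give the matching lower bound $\|\bar v^{-1}\|_{L^{\beta_0}_{\rho}(B(x_0,3\delta))}\leq C/\inf_{B(x_0,2\delta)}\bar v$. To bridge the two integral norms, I would test with $\varphi=\eta^2\bar v^{-1}$ to derive a BMO-type bound on $\log\bar v$ via the weighted Poincaré inequality, and then apply the Bombieri--Giusti lemma (or a direct John--Nirenberg argument) to conclude that opposite power means of $\bar v$ are comparable. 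Chaining the three estimates produces \eqref{Harnacklinearintro}.

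I expect the main obstacle to be controlling the first-order term $q\int H^{q-1}(\nabla u)\langle\nabla H(\nabla u),\nabla v\rangle a(x,u)\varphi\,dx$: when $p<2$ the factor $H^{q-1}(\nabla u)$ is not pointwise dominated by the weight $H^{p-2}(\nabla u)$, so the hypothesis $q\geq\max\{p-1,1\}$, together with the uniform gradient bound $\|\nabla u\|_{L^\infty(\Omega')}$, must be used via a tailored Young inequality to reallocate the powers and absorb this term into the principal part plus a harmless zero-order remainder. A secondary technical point is that the test functions $\eta^2\bar v^\beta$ must actually belong to $H_{\rho,loc}^{1,2}(\Omega)$ with compact support; this is permitted by the hypothesis $v\in L^\infty_{loc}(\Omega)$ already built into the statement.
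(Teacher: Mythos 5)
Your outline matches the paper's high-level architecture — a weighted Moser iteration applied to the shifted function $\bar v=v+\|g\|_{L^\infty}$, with the zero-order source absorbed into a bounded coefficient $\bar c$ by that shift, Caccioppoli estimates from test functions $\eta^2\bar v^\beta$, and the weighted Sobolev embedding of Theorem~\ref{Poincarepesata} to iterate — but it diverges at the crossover step linking the positive and negative power means of $\bar v$. The paper follows Trudinger's refinement and \emph{deliberately avoids} the John--Nirenberg lemma: after obtaining $\int\tilde\rho\,|\nabla\log\bar v_\tau|^2\eta^2\le C$ and a zero-mean weighted Poincar\'e bound, it tests the linearized inequality with $\eta^2\,\bar v_\tau^{-1}\bigl(|\log\bar v_\tau|^\beta+(2\beta)^\beta\bigr)$ for $\beta\ge 1$ and derives, using only the weighted Sobolev inequality and the algebraic inequalities \eqref{desi1}--\eqref{desi2}, the comparability $\bigl(\int\bar v_\tau^{r_0}\bigr)^{1/r_0}\le C\bigl(\int\bar v_\tau^{-r_0}\bigr)^{-1/r_0}$. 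Your Bombieri--Giusti / John--Nirenberg route is plausible in principle, but in this degenerate weighted setting it needs additional justification: the exponential decay of superlevel sets of $\log\bar v$ has to be uniform over sub-balls, and the constant $C_\rho(\Omega')$ in \eqref{poincareweight} depends on $|\Omega'|$ through the singular integrability \eqref{stima peso locale} of $|\nabla u|^{-t}$, so you would have to track that dependence explicitly across all scales. Trudinger's iteration is chosen in the paper precisely to sidestep this.

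Two smaller corrections. Your claim that $\rho=|\nabla u|^{p-2}$ is ``of Muckenhoupt type'' is not established by Theorem~\ref{local1} and is not what the paper uses; the embedding $H^{1,2}_{\rho,0}\hookrightarrow L^{\bar 2_p}$ comes from potential estimates (as in Remark~\ref{zeromeans}), not Muckenhoupt weight theory, and no more is needed. Your concern that $H^{q-1}(\nabla u)$ fails to be dominated by $H^{p-2}(\nabla u)$ when $p<2$ is also off-target: the estimate proceeds by writing $H^{q-1}=H^{q-p/2}\cdot H^{(p-2)/2}$, and $q\ge\max\{p-1,1\}$ forces $q\ge p/2$ in both regimes, so $H^{q-p/2}\le\|\nabla u\|_{L^\infty}^{q-p/2}$ and the half-power $H^{(p-2)/2}$ is absorbed by weighted Young into the principal term — this works uniformly in $p$. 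What \emph{does} change in the regime $(2N+2)/(N+2)<p<2$ is the Sobolev step: the weight $\tilde\rho$ is no longer bounded, so the paper replaces the weighted Sobolev inequality with the classical one combined with H\"older and $\tilde\rho\in L^t$, $t=(p-1)/((2-p)\theta)$, from \eqref{stima peso locale}, and it is precisely here that the lower bound $p>(2N+2)/(N+2)$ guarantees $\chi'=2^*/t^\#>1$. Finally, the paper first performs the dilation $y=(x-x_0)/\delta$, which is what makes the factor $\delta^p$ appear naturally in $g$; your unscaled version can be made to work because $C$ is allowed to depend on $\delta$, but the bookkeeping is cleaner after rescaling.
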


As a consequence we have the following result:

\begin{thm}[Strong Maximum Principle]\label{Strong2}
     Let $p> (2N+2)/(N+2)$ and $u\in C^{1,\alpha}_{loc}(\Omega)$ be a  weak solution to~\eqref{eq:Euler-Lagrange}. Assume that
$ q\geq\max\,\{p-1,1\}$ and suppose that $H$, $f(x,s), a(x,s)$ fulfill $(h_H)$ and $(h_p)$. Assume that $a_{x_i}\le 0$ and $f_{x_i}\ge 0$, for any $i=1,...,n$.
 If $v\in H^{1,2}_{\rho,loc}(\Omega)\cap C(\Omega)$ is a nonnegative supersolution of \eqref{soluzionelinearizzato}, then for any connected domain $\Omega '\subset \Omega$, with $v\ge 0$ in $\Omega'$, we have $v>0$ in $\Omega '$, unless $v\equiv 0$ in $\Omega '$.
\end{thm}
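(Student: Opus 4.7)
The plan is to combine the Harnack inequality for nonnegative supersolutions (the weak-Harnack variant supplied by Theorem~\ref{Harnacklinearizzo}) with a standard connectedness argument.

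\emph{Step 1 (reduction to openness of the zero set).} Fix a connected subdomain $\Omega'\subset\Omega$ on which $v\ge 0$, and set $Z:=\{x\in\Omega':v(x)=0\}$. Since $v\in C(\Omega)$, $Z$ is closed in $\Omega'$. The dichotomy "$v\equiv 0$ in $\Omega'$ or $v>0$ in $\Omega'$" then follows from the connectedness of $\Omega'$ as soon as I prove that $Z$ is also open in $\Omega'$. So the task reduces to showing: if $x_0\in Z$, then $v\equiv 0$ on some ball around $x_0$.

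\emph{Step 2 (removing the inhomogeneous term via the sign hypotheses).} Choose $\delta>0$ with $\overline{B(x_0,6\delta)}\subset\Omega'\subset\subset\Omega$. For any nonnegative $\varphi\in H^{1,2}_{\rho}(\Omega)$ compactly supported in $\Omega$, the supersolution condition $L^i_u(v,\varphi)\ge 0$ rewrites as
\begin{equation*}
\widetilde L(v,\varphi)\;\ge\;\int_{\Omega}\bigl(f_{x_i}(x,u)-a_{x_i}(x,u)H^q(\nabla u)\bigr)\,\varphi\,dx\;\ge\;0,
\end{equation*}
where $\widetilde L$ denotes the operator obtained from $L^i_u$ by dropping the integral $\int(a_{x_i}H^q-f_{x_i})\varphi\,dx$, and the final inequality uses the hypotheses $a_{x_i}\le 0$ and $f_{x_i}\ge 0$. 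Hence $v$ is a nonnegative supersolution of $\widetilde L(v,\varphi)=0$, which is precisely \eqref{soluzionelinearizzato} with the function $g$ of Theorem~\ref{Harnacklinearizzo} identically zero.

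\emph{Step 3 (Harnack and conclusion).} Applying the weak-Harnack/supersolution counterpart of Theorem~\ref{Harnacklinearizzo} with $g\equiv 0$ to the balls $B(x_0,\delta)\subset B(x_0,2\delta)\subset B(x_0,6\delta)$ yields an inequality of the type $\sup_{B(x_0,\delta)}v\le C\inf_{B(x_0,2\delta)}v$ (or the equivalent $L^s$-mean bound over $B(x_0,2\delta)$). Since $x_0\in B(x_0,2\delta)$ and $v(x_0)=0$, the right-hand side vanishes, forcing $v\equiv 0$ on $B(x_0,\delta)$. Therefore $Z$ is open in $\Omega'$, and the strong maximum principle follows by connectedness.

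\emph{Main obstacle.} The delicate point is that Theorem~\ref{Harnacklinearizzo} is stated for solutions, whereas $v$ is only assumed to be a supersolution. The Moser iteration behind that theorem, however, splits into a sup-estimate for nonnegative subsolutions and a weak Harnack (inf-estimate) for nonnegative supersolutions, and Step~3 only needs the latter; checking that this supersolution half of the proof survives the reduction to $\widetilde L$ with $g\equiv 0$ is the technical heart of the argument. The remaining verifications---that every integrand in $L^i_u$ is meaningful once $v\in H^{1,2}_{\rho,loc}\cap L^{\infty}_{loc}$, and that the Moser test functions lie in $H^{1,2}_{\rho}(\Omega)$ with compact support in $\Omega$---are routine consequences of the $C^{1,\alpha}$ regularity of $u$ and of the integrability of $\rho=|\nabla u|^{p-2}$ already recalled in the paper.
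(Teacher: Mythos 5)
Your proposal is correct and follows the same route as the paper: show that the zero set $U_v=\{v=0\}$ is closed by continuity and open by the weak Harnack inequality for nonnegative supersolutions (Theorem~\ref{Harnacklinearizzato}), where the sign hypotheses $a_{x_i}\le 0$, $f_{x_i}\ge 0$ render the inhomogeneous term $g$ harmless exactly as recorded in Remark~\ref{remarkino}, and conclude by connectedness. Your ``main obstacle'' paragraph in fact sharpens the paper's very terse proof, which cites Theorem~\ref{Harnacklinearizzo} (stated for solutions) when the supersolution half, Theorem~\ref{Harnacklinearizzato} together with Remark~\ref{remarkino}, is the precise ingredient needed.
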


In the following we devote Section \ref{secpreliminare} to the introduction of the main geometrical notion
used in the paper and some useful regularity results about solutions to problem \eqref{eq:Euler-Lagrange}. In Section \ref{Sec3} we prove Theorem \ref{Harnack} and Theorem \ref{Principio forte}. In Section \ref{Sec4}  we establish Theorem \ref{Harnacklinearizzo} and Theorem \ref{Strong2}.

\section{Preliminary results}\label{secpreliminare}
\noindent {\bf Notation.} Generic fixed and numerical constants will
be denoted by $C$ (with subscript in some case) and they will be
allowed to vary within a single line or formula. By $|E|$ we will
denote the Lebesgue measure of a measurable set $E$.

The aim of this section is to recall some qualitative properties of the solutions of \eqref{eq:debole} and geometrical tools about anisotropic elliptic operator defined above.

We recall the following:

\begin{thm}\label{local1}
\label{stima hessiano locale}\rm
Let $1<p<\infty$ and  $u\in C^{1,\alpha}_{loc}(\Omega)$ be a  weak solution to \eqref{eq:Euler-Lagrange}, with $H$, $f(x,s)$ and $a(x,s)$ satisfying assumptions $(h_H)$ and $(h_p)$.
We set $u_i:=\partial u/\partial x_i$, for $\beta\in [0,1)$ and $\gamma<N-2$ ($\gamma=0$ if $N=2$),
there holds:

\begin{equation}\label{eq stima hessiano locale componente}
\sup_{y\in \Omega} \int_{\Omega '\setminus Z_u} \frac{|\nabla u|^{p-2-\beta}|\nabla u_i|^2}{|x-y|^{\gamma}} \,dx \le C \quad \forall i=1,...,N,
\end{equation}
for any $\Omega '\subset\subset \Omega$, where $C= C(\Omega',H,\beta, \gamma,p,N,\|u\|_{W^{1,\infty}_{loc}},\|f\|_{W^{1,\infty}_{loc}})$.
Moreover, fix $t\in [0,p-1)$ and $\gamma<N-2$ ($\gamma=0$ if $N=2$), we have 
\begin{equation}\label{stima peso locale}
\sup\limits_{y\in \Omega}\, \int_{\Omega'}\frac{1}{|\nabla  u|^t |x-y|^\gamma} \,dx \leq \dot C\,,
\end{equation}
where $\dot C=\dot C(\Omega',H,t,\gamma,N,p,\|u\|_{W^{1,\infty}_{loc}}, \|f\|_{W^{1,\infty}_{loc}})$.
\end{thm}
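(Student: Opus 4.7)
The plan is to derive both estimates by formally differentiating the weak equation \eqref{eq:debole} with respect to $x_i$ on the open set $\Omega\setminus Z_u$, where $u$ is classically $C^2$. Since $H$ is smooth on $\R^N\setminus\{0\}$ by $(h_H)$ and $\nabla u\neq 0$ there, the differentiation can be carried out pointwise, yielding the linearized strong equation
\begin{equation*}
-\partial_j\bigl(A^{jk}(\nabla u)\,\partial_k u_i\bigr) = f_{x_i}+f_s\,u_i-a_{x_i}H^q(\nabla u)-a_s H^q(\nabla u)\,u_i - q\,a(x,u)H^{q-1}(\nabla u)\,\langle\nabla H(\nabla u),\nabla u_i\rangle,
\end{equation*}
where $A^{jk}(\xi):=\partial_{\xi_k}\bigl(H^{p-1}(\xi)\,\partial_{\xi_j}H(\xi)\bigr)$ is an elliptic matrix satisfying $c_1|\xi|^{p-2}|\eta|^2\le A^{jk}(\xi)\eta_j\eta_k\le c_2|\xi|^{p-2}|\eta|^2$ thanks to $(h_H)$.

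For \eqref{eq stima hessiano locale componente} I would run a Caccioppoli-type scheme on this linearized equation, with the prototype test function a regularization of
\begin{equation*}
\varphi_\epsilon=\frac{u_i\,\psi^2}{(|\nabla u|^2+\epsilon)^{\beta/2}\,|x-y|^\gamma},
\end{equation*}
where $\psi\in C^\infty_c(\Omega')$ is a cutoff equal to $1$ on the region of interest and $\epsilon>0$ removes the singularity on $Z_u$. Substituting into the linearized equation and distributing derivatives, the principal part produces, up to a positive constant, the quantity $\int |\nabla u|^{p-2-\beta}|\nabla u_i|^2\psi^2/|x-y|^\gamma\,dx$. The remaining terms involve $\nabla\psi$, $\nabla(|x-y|^{-\gamma})$ and the lower-order contributions coming from $a(x,u)H^q(\nabla u)$; by Cauchy--Schwarz and Young's inequality they are absorbed into a fraction of the principal term plus remainders that are $L^1$-controlled via $u\in W^{1,\infty}_{loc}$ together with $q\ge\max\{p-1,1\}$ (which guarantees integrable powers of $|\nabla u|$). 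Crucially, uniformity in $y\in\Omega$ follows from the condition $\gamma<N-2$, which puts $|x-y|^{-\gamma}$ and $|x-y|^{-\gamma-1}$ in $L^1_{loc}$ uniformly in $y$. Sending $\epsilon\to 0$ by dominated convergence (and using $|Z_u|=0$) delivers \eqref{eq stima hessiano locale componente}.

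For the weight estimate \eqref{stima peso locale}, the range $t\in[0,p-1)$ is naturally dual to the Hessian estimate. One route is to interpolate by Hölder's inequality: write $|\nabla u|^{-t}$ as the product of a power of $(|\nabla u|^2+\epsilon)^{-1}$ against a Hessian-friendly weight and invoke \eqref{eq stima hessiano locale componente} together with the Sobolev embedding for the weighted space $H^{1,2}_\rho$ with $\rho=|\nabla u|^{p-2}$. A more direct variant tests the weak form of \eqref{eq:Euler-Lagrange} with a function proportional to $\psi^2/((|\nabla u|^2+\epsilon)^{t/2}|x-y|^\gamma)$; expanding derivatives and plugging in the Hessian bound closes the estimate after absorption. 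In either case $\gamma<N-2$ delivers uniformity in $y$.

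The main obstacle will be technical rather than structural: rigorously justifying the test function and the integrations by parts in the subquadratic case $1<p<2$, where $|\nabla u|^{p-2-\beta}$ blows up on $Z_u$. This is handled by the $\epsilon$-regularization above, a careful verification that all approximating integrals stay uniformly bounded as $\epsilon\to 0$, and the independently established fact $|Z_u|=0$, which allows passage to the limit in the pointwise integrands. The anisotropy only modifies the ellipticity constants through $(h_H)$, and the first-order term is benign under $q\ge\max\{p-1,1\}$, so the Damascelli--Sciunzi strategy (cf.\ \cite{DS2,CFV,BMV}) transplants to the anisotropic setting with only bookkeeping changes.
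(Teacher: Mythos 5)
The paper does not actually present a proof of this theorem: it cites \cite{BMV,CRS,DS1} and adds a single comment. That comment, however, points to a genuine gap in your proposal. The paper says ``Note that the positivity of $f(x,\cdot)$ actually is needed to obtain the estimate \eqref{stima peso locale},'' and this positivity hypothesis, which is assumption $(h_p)(i)$, never appears in your argument for the weight bound. Your first suggested route for \eqref{stima peso locale} (H\"older interpolation from the Hessian estimate plus a weighted Sobolev embedding) cannot work on its own: from $\int |\nabla u|^{p-2-\beta}|\nabla u_i|^2|x-y|^{-\gamma}\,dx \le C$ alone there is no path to $\int |\nabla u|^{-t}|x-y|^{-\gamma}\,dx \le C$ without structural input from the equation, since the Hessian bound says nothing about how often $|\nabla u|$ is small. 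Your second route (testing the weak form with $\psi^2/((|\nabla u|^2+\epsilon)^{t/2}|x-y|^\gamma)$) is the right starting point, and it is the mechanism used in \cite{DS1,CRS,BMV}, but you say only that ``plugging in the Hessian bound closes the estimate after absorption.'' The actual closing step is that $\int f\,\varphi\,dx \ge c\int \psi^2(|\nabla u|^2+\epsilon)^{-t/2}|x-y|^{-\gamma}\,dx$ because $f$ is bounded below by a positive constant on $\Omega'$; the left-hand integrals (after expanding $\nabla\varphi$) are then bounded above via the Hessian estimate, the constraint $t<p-1$, and the observation that $a\,H^q(\nabla u)\,\varphi$ is harmless since $q\ge p-1>t$. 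Without invoking the sign of $f$, neither of your two routes closes, so \eqref{stima peso locale} is not established.

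Your sketch of \eqref{eq stima hessiano locale componente} is in the spirit of the Damascelli--Sciunzi Caccioppoli scheme, and the test function $u_i\psi^2/((|\nabla u|^2+\epsilon)^{\beta/2}|x-y|^\gamma)$ is the right prototype. Two points deserve more than the ``bookkeeping changes'' you promise. First, when the derivative hits the weight $(|\nabla u|^2+\epsilon)^{-\beta/2}$ you generate a term proportional to $\beta\int A^{jk}(\nabla u)\partial_k u_i\,u_i\,(|\nabla u|^2+\epsilon)^{-\beta/2-1}u_l u_{lj}\,\psi^2|x-y|^{-\gamma}\,dx$, whose absolute value is comparable to $\beta$ times the principal term but couples the index $i$ with all the other second derivatives $u_{lj}$; the standard way out is to first sum over $i$ (or bound the full Hessian) so that the bad term is a genuine fraction of the good one, and this is precisely where the hypothesis $\beta<1$ is used. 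Second, the lower-order term $q\,a(x,u)H^{q-1}(\nabla u)\langle\nabla H(\nabla u),\nabla u_i\rangle$ must be absorbed by Young's inequality using $q\ge p-1$, and in the range $p<2$ with $q=p-1$ this requires the weight $|\nabla u|^{p-2-\beta}$ on the principal term for the absorption to work; stating this explicitly would make the argument complete rather than plausible.
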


We refer to \cite{BMV,CRS,DS1} for a detailed proof. Note that the positivity of $f(x,\cdot)$ actually is needed to obtain the estimate \eqref{stima peso locale}. Moreover by \eqref{stima peso locale}, it follows that the Lebesgue measure of $Z_u$ is zero.

The summability properties of $|\nabla u|^{-1}$ obtained in Theorem \ref{local1} allows to prove a weighted Poincar\'e-Sobolev  type inequality that will be useful in the sequel. For the proof we refer to \cite{DS1,FMS}.

\begin{thm}\label{Poincarepesata}
Let $p\geq2$ and let $u\in C^{1,\alpha}_{loc}(\Omega)$ be a  weak solution of $\eqref{eq:Euler-Lagrange}$, with $H$, $f(x,s)$ and $a(x,s)$ satisfying assumptions $(h_H)$ and $(h_p)$. Then, for any $\Omega'\subset\subset\Omega$, setting $\rho=|\nabla u|^{p-2}$, we have  that  $H^{1,2}_{\rho,0}(\Omega')$ is continuously embedded in $L^{q}(\Omega')$ for  $1\leq q< \bar 2_p$, where
\[
\frac{1}{\bar{ 2}_p}=\frac{1}{2}-\frac{1}{N}+\frac{p-2}{p-1}\,\frac{1}{N}\,.
\]

Consequently, since $\overline{2}_p>2$, for $w\in H_{\rho,0}^{1,2}(\Omega')$, we have 
\begin{equation}\label{poincareweight}
    \|w\|_{L^2(\Omega ')}\le C_{\rho}\left(\int_{\Omega '}\rho|\nabla w|^2\right)^{1/2}
\end{equation}
with $C_\rho=C_\rho(\Omega ')\rightarrow 0$ if $|\Omega '|\rightarrow 0.$

\end{thm}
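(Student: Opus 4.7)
The plan is to reduce the weighted Sobolev embedding for $H^{1,2}_{\rho,0}(\Omega')$ to the classical unweighted Sobolev embedding via Hölder's inequality, with admissibility of the Hölder split guaranteed by the degenerate-weight summability estimate \eqref{stima peso locale} from Theorem \ref{local1}. For $p \geq 2$, the weight $\rho = |\nabla u|^{p-2}$ degenerates on the critical set $Z_u$, but the $C^{1,\alpha}_{loc}$ regularity of $u$ together with the positivity of $f$ gives $|\nabla u|^{-t}\in L^{1}_{loc}(\Omega)$ for every $t<p-1$, which is exactly the input needed.

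By definition of $H^{1,2}_{\rho,0}(\Omega')$ as the closure of $C^1_c(\Omega')$, it suffices to establish the estimate for $w\in C^1_c(\Omega')$. For $r\in[1,2)$, I would apply Hölder with the conjugate pair $(2/r,\,2/(2-r))$:
\[
\int_{\Omega'}|\nabla w|^r\,dx \;\le\; \left(\int_{\Omega'}\rho|\nabla w|^2\,dx\right)^{r/2} \left(\int_{\Omega'}\rho^{-r/(2-r)}\,dx\right)^{(2-r)/2}.
\]
Since $\rho^{-r/(2-r)}=|\nabla u|^{-(p-2)r/(2-r)}$, the weight integral is finite by \eqref{stima peso locale} (taking $\gamma=0$) provided $(p-2)r/(2-r)<p-1$. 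The classical Sobolev embedding $W^{1,r}_0(\Omega')\hookrightarrow L^{r^{*}}(\Omega')$ with $r^{*}=Nr/(N-r)$ then yields
\[
\|w\|_{L^{r^{*}}(\Omega')} \;\le\; C\,\|\nabla w\|_{L^r(\Omega')} \;\le\; C'\left(\int_{\Omega'}\rho|\nabla w|^{2}\,dx\right)^{1/2},
\]
where $C'$ absorbs the fixed weight factor. Optimizing $r$ within the admissible range pushes $r^{*}$ toward the endpoint $\bar{2}_p$, and the embedding into $L^q$ for any $q\in[1,\bar{2}_p)$ follows by a further Hölder interpolation using that $\Omega'$ has finite measure.

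For the Poincaré consequence \eqref{poincareweight}, since $\bar{2}_p>2$, I would fix any $q\in(2,\bar{2}_p)$ and apply Hölder on the left:
\[
\|w\|_{L^2(\Omega')} \;\le\; |\Omega'|^{\frac{q-2}{2q}}\|w\|_{L^q(\Omega')} \;\le\; C\,|\Omega'|^{\frac{q-2}{2q}}\left(\int_{\Omega'}\rho|\nabla w|^{2}\,dx\right)^{1/2},
\]
which produces the constant $C_\rho=C|\Omega'|^{(q-2)/(2q)}$, visibly tending to $0$ as $|\Omega'|\to 0$. Note that because the preceding bound is a pure gradient estimate (no $\|w\|_{L^{2}}$ term on the right), no absorption argument is required.

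The principal obstacle is pinning down the sharp exponent: the value $\bar{2}_p$ corresponds to pushing $r$ to the boundary of the Hölder-admissible range dictated by the integrability threshold of $|\nabla u|^{-t}$ as $t\uparrow p-1$, and this is exactly where the full strength of \eqref{stima peso locale} (which in turn depends on the positivity of $f$ and on $u\in C^{1,\alpha}_{loc}$) is used. Once the inequality is proved on $C^1_c(\Omega')$, density transfers it to all of $H^{1,2}_{\rho,0}(\Omega')$.
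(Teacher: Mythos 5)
Your reduction to the unweighted Sobolev embedding cannot reach the exponent $\bar 2_p$, so there is a genuine gap. With your H\"older split, the admissibility condition $(p-2)r/(2-r)<p-1$ forces $r<\tfrac{2(p-1)}{2p-3}$, and the classical embedding then gives at best $L^{r^*}$ with $\tfrac{1}{r^*}=\tfrac1r-\tfrac1N>\tfrac{2p-3}{2(p-1)}-\tfrac1N=1-\tfrac{1}{2(p-1)}-\tfrac1N$. Since $\tfrac{1}{\bar 2_p}=\tfrac12-\tfrac{1}{N(p-1)}$, the gap between your limiting exponent and $\bar 2_p$ is $\bigl(\tfrac12-\tfrac1N\bigr)\tfrac{p-2}{p-1}$, which is strictly positive whenever $p>2$ and $N\ge3$: so ``optimizing $r$'' does not push $r^*$ toward $\bar 2_p$, contrary to what you assert. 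Worse, when $p\ge \tfrac{2(N-1)}{N-2}$ your best exponent is at most $2$, so this route does not even yield the embedding into $L^2$, and your derivation of \eqref{poincareweight} (which needs some $q>2$ in order to produce a constant vanishing as $|\Omega'|\to0$) fails exactly in that range; the theorem places no upper bound on $p$, so this is not a peripheral case.

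The loss comes from using \eqref{stima peso locale} only with $\gamma=0$, i.e.\ only the bare local integrability of $|\nabla u|^{-t}$. The full strength of Theorem \ref{local1} is the estimate with the singular kernel $|x-y|^{-\gamma}$ for every $\gamma<N-2$, uniformly in $y$, and that is precisely what the argument the paper relies on (taken from \cite{DS1,FMS}, cf.\ Remark \ref{zeromeans}) exploits: one starts from the potential representation $|w(x)|\le C\int_{\Omega'}|\nabla w(y)|\,|x-y|^{1-N}\,dy$ as in \cite[Section 7]{GiTru}, writes $|\nabla w|=\rho^{1/2}|\nabla w|\cdot\rho^{-1/2}$ inside the potential, and estimates the resulting weighted Riesz potential using \eqref{stima peso locale} with $t$ close to $p-1$ and $\gamma$ close to $N-2$; this is what produces the effective dimension $N(p-1)$ and hence the exponent $\bar 2_p$. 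Your density reduction to $w\in C^1_c(\Omega')$ and the final step deducing \eqref{poincareweight} from an $L^q$ bound with $q>2$ are fine, but the core embedding requires the potential-estimate argument rather than the pointwise H\"older split.
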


\begin{rem}\label{zeromeans}
The proof of Theorem \ref{Poincarepesata} is based on potential estimates, see \cite[Section 7]{GiTru}. Since potential estimates are also available for functions with zero mean, we can prove Theorem \ref{Poincarepesata} for functions with zero mean (see \cite[Corollary 2]{FMS}). In particular, for any $w\in H^{1,2}_{\rho}(\Omega')$ such that $\int_{\Omega '}w=0$ and $\Omega'$ is convex, then the estimate \eqref{poincareweight} holds.
\end{rem}
In the work we will use the fact that, since 
$$0\le \frac{1}{(|\nabla u|+|\nabla v|)^t}\le \frac{1}{|\nabla u|^t},$$
from \eqref{stima peso locale}, it follows
$$\int_{\Omega '} \frac{1}{(|\nabla  u|+|\nabla  v|)^t |x-y|^\gamma} \,dx \leq \dot C.$$

Therefore, we also have a weighted Poincar\'e inequality with weight $\rho=(|\nabla u|+|\nabla v|)^{p-2},$ for any $v\in C^{1,\alpha}_{loc}(\Omega),$ for $p\ge 2$. On the other hand, if $1<p<2$, the weighted Poincar\'e inequality follows at once by the classic Sobolev inequality.

\subsection{Finsler norms}

In the following, we introduce the Finsler norm $H$ and we recall technical results  about anisotropic elliptic operator that will be involved in the proof of our main results.

Let $H$ be a function belonging to $C^{2,\alpha}_{loc}(\R^N \setminus \{0\})$.
$H$ is said a ``\emph{Finsler norm}'' if it satisfies the following assumptions:
\begin{itemize}
	\item[$(h_H)$]
\begin{enumerate}
    \item[(i)] $H(\xi)>0 \quad \forall \xi \in \R^N \setminus \{0\}$;

    \item[(ii)] $H(s \xi) = |s| H(\xi) \quad \forall \xi \in \R^N \setminus \{0\}, \, \forall s \in \R$;

    \item[(iii)] $H$ is \emph{uniformly elliptic}, that means the set $\mathcal{B}_1^H:=\{\xi \in \R^N  :  H(\xi) < 1\}$ is \emph{uniformly convex}
    \begin{equation}\label{Hunifconvex}
        \exists \Lambda > 0: \quad \langle \D^2H(\xi)v, v \rangle \geq \Lambda |v|^2 \quad \forall \xi \in \partial \mathcal{B}_1^H, \; \forall v \in \nabla H(\xi)^\bot.
    \end{equation}
\end{enumerate}
\end{itemize}

Moreover, assumption (iii) is equivalent to assume that $\D^2 (H^2)$ is definite positive, i.e. there exist $\lambda>0$ such that, for all $\xi\neq 0$ and $\eta\in\mathbb{R}^N$,
$$
 \langle D^2 H^2(\xi)\eta,\eta\rangle \geq \lambda |\eta|^2.
$$

Since $H$ a norm in $\R^N$, we immediately get that there exists $\alpha_1,\alpha_2>0$ such that:
\begin{equation}\label{H equiv euclidea}
\alpha_1|\xi|\leq H(\xi)\leq \alpha_2|\xi|,\,\quad\forall\, \xi\in\R^N.
\end{equation}

We recall that, since $H$ is a differentiable and $1$-homogeneous function, it holds the Euler's characterization result, i.e.
\begin{equation}\label{eulero}
\la\n H(\xi),\xi\ra=H(\xi) \qquad\forall\,x\in\R^N,
\end{equation}
and
\begin{equation}\label{grad 0 omog}
\nabla H(t\xi)=\hbox{sign}(t)H(\xi) \qquad\forall\,\xi\in\R^N,\, \forall t\in\R.
\end{equation}

Since $H$ is $1$-homogeneous, we have that $\nabla H$ is $0$-homogeneous and it satisfies$$\nabla H (\xi) = \nabla H \left(|\xi|\frac{\xi}{|\xi|}\right)= \nabla H \left(\frac{\xi}{|\xi|}\right) \qquad \forall \xi \in \R^N\setminus \{0\}.$$
Hence, by the previous equality, we infer that there exists a constant $M>0$ such that
\begin{equation}\label{eq:BddH}
	| \nabla H(\xi)| \leq M \qquad \forall \xi \in \R^N\setminus \{0\}.
\end{equation}

Since $H$ is $1$-homogeneous function we get 
\begin{equation}\label{hess -1 omog}
\D^2 H(t\xi)=\frac{1}{|t|}\D^2H(\xi) \qquad\forall\,\xi \in  \R^N\setminus \{0\},\, \forall t\neq 0,
\end{equation}

For the same reasons there exists a constant $\overline M>0$ such that:
\begin{equation}\label{eq:PropFinsler11}
|D^2H(\xi)|\le \frac{\overline M}{|\xi|}\qquad \forall\,\xi\in\R^N\setminus\{0\}, 
\end{equation}
where $|\cdot|$ denotes the usual Euclidean  norm of a matrix, and
\begin{equation}\label{eq:PropFinsler22}
    D^2H(\xi)\xi=0 \qquad \forall\,\xi\in\R^N\setminus\{0\}.
\end{equation}

We refer to \cite{BaChSh, BePa,CianSal,CoFaVa1,CFV} for more details about Finsler geometry.

In the sequel we use the following standard estimates, see \cite[Lemma 2.2]{BMV}, \cite[Proposition 2.1]{EMSV} and \cite[Lemma 4.1]{CRS} (see also \cite{CFV}).

\begin{lem}\label{stimedamascellianisotrope}
There exist positive constants $C_1, C_2,\tilde C_1,\tilde C_2$ depending on $p$ and $H$ such that for any $\xi\in\R^{N}\setminus \{0\}$ and for all $\eta,  \eta' \in \R^N$ such that $|\eta|+ |\eta'|>0$, it holds the following inequalities
\begin{eqnarray}\label{eq:inequalities1}
&&\langle H^{p-1}(\eta)\nabla H(\eta)-H^{p-1}(\eta')\nabla H(\eta'), \eta- \eta' \rangle \geq C_1
(|\eta|+|\eta'|)^{p-2}|\eta-\eta'|^2, \\ \label{eq:inequalities2}
&&|H^{p-1}(\eta)\nabla H(\eta)-H^{p-1}(\eta')\nabla H(\eta')| \leq C_2 (|\eta| + |\eta'|)^{p-2}|\eta-\eta '|,\\
\label{eq:inequalities3}
&&(p-1)H(\xi)^{p-2}\langle \nabla H(\xi),\eta \rangle^{2} +
   H^{p-1}(\xi)  \langle D^{2}H(\xi) \eta, \eta \rangle \ge \Tilde{C}_1 H^{p-2}(\xi)) |\eta|^{2},\\
   \label{eq:inequalities4}
   &&(p-1)H^{p-2}(\xi) \langle \nabla H(\xi), \eta \rangle\langle \nabla H(\xi), \eta' \rangle + H^{p-1}(\xi) \langle D^{2}H(\xi) \eta, \eta' \rangle\le \tilde{C}_2 H(\xi)^{p-2} |\eta| |\eta'|.
\end{eqnarray}
\end{lem}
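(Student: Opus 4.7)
\textbf{Proof plan for Lemma \ref{stimedamascellianisotrope}.} The four inequalities all revolve around the matrix field
\[
A(\xi):=(p-1)\,H^{p-2}(\xi)\,\nabla H(\xi)\otimes\nabla H(\xi)+H^{p-1}(\xi)\,D^2H(\xi),\qquad \xi\neq 0,
\]
which is precisely $\nabla_\xi[H^{p-1}(\xi)\nabla H(\xi)]$. The quadratic form $\la A(\xi)\eta,\eta\ra$ is the left-hand side of \eqref{eq:inequalities3} and the bilinear form $\la A(\xi)\eta,\eta'\ra$ is the left-hand side of \eqref{eq:inequalities4}, so those two inequalities say exactly that $A(\xi)$ is elliptic and bounded with both constants comparable to $H^{p-2}(\xi)$. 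Once they are established, the vector inequalities \eqref{eq:inequalities1}--\eqref{eq:inequalities2} follow from the integral identity
\[
H^{p-1}(\eta)\nabla H(\eta)-H^{p-1}(\eta')\nabla H(\eta')=\int_0^1 A(\eta_t)(\eta-\eta')\,dt,\qquad \eta_t:=\eta'+t(\eta-\eta'),
\]
combined with \eqref{H equiv euclidea} and the elementary estimate $\int_0^1|\eta_t|^{p-2}\,dt\asymp (|\eta|+|\eta'|)^{p-2}$ (for $1<p<2$ the monotonicity of $s\mapsto s^{p-2}$ gives the lower bound pointwise since $|\eta_t|\le|\eta|+|\eta'|$, while for $p\ge 2$ a short Jensen-type argument yields the corresponding lower bound).

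The heart of the argument is therefore \eqref{eq:inequalities3}--\eqref{eq:inequalities4}. The plan is to exploit the splitting induced by $\nabla H(\xi)$: set
\[
\alpha:=\frac{\la\nabla H(\xi),\eta\ra}{H(\xi)},\qquad w:=\eta-\alpha\xi,
\]
so that by Euler's identity \eqref{eulero} one has $\la\nabla H(\xi),w\ra=0$, while \eqref{eq:PropFinsler22} gives $D^2H(\xi)\xi=0$. Hence
\[
\la A(\xi)\eta,\eta\ra=(p-1)H^p(\xi)\,\alpha^2+H^{p-1}(\xi)\,\la D^2H(\xi)w,w\ra .
\]
Applying the $-1$-homogeneity \eqref{hess -1 omog} with $t=1/H(\xi)$ and the $0$-homogeneity of $\nabla H$ from \eqref{grad 0 omog}, the vector $w$ also lies in $\nabla H(\xi/H(\xi))^\perp$ and $\xi/H(\xi)\in\partial\mathcal B_1^H$, so the uniform convexity assumption \eqref{Hunifconvex} produces
\[
\la D^2H(\xi)w,w\ra\ge\frac{\Lambda}{H(\xi)}|w|^2.
\]
Combining this with the trivial bound $|\eta|^2\le 2(\alpha^2|\xi|^2+|w|^2)$ and the equivalence $|\xi|\asymp H(\xi)$ from \eqref{H equiv euclidea} yields \eqref{eq:inequalities3}.

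For \eqref{eq:inequalities4} the same decomposition is applied to both $\eta$ and $\eta'$; the cross term $\la D^2H(\xi)w,w'\ra$ is controlled by Cauchy--Schwarz together with the Hessian bound \eqref{eq:PropFinsler11}, while the bound on $\alpha,\alpha'$ comes from \eqref{eq:BddH}. All terms are then of order $H^{p-2}(\xi)|\eta||\eta'|$ after a final application of \eqref{H equiv euclidea}. The main obstacle I anticipate is purely conceptual rather than computational: the convexity hypothesis \eqref{Hunifconvex} is postulated only on the Finsler unit sphere and only for directions orthogonal to $\nabla H$, so the decomposition above together with the homogeneity identities \eqref{hess -1 omog}--\eqref{eq:PropFinsler22} must be used to transfer it to arbitrary $\xi\ne 0$ and arbitrary $\eta$; once this reduction is in place, inequalities \eqref{eq:inequalities1}--\eqref{eq:inequalities2} for the vector field $H^{p-1}\nabla H$ follow mechanically from \eqref{eq:inequalities3}--\eqref{eq:inequalities4}.
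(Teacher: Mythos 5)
The paper does not actually prove this lemma; it simply cites it from \cite{BMV}, \cite{EMSV}, \cite{CRS} (and \cite{CFV}). Your write-up therefore supplies an argument the paper delegates to references, and the strategy you chose—set $A(\xi):=\nabla_\xi\bigl[H^{p-1}(\xi)\nabla H(\xi)\bigr]$, prove the pointwise ellipticity/boundedness \eqref{eq:inequalities3}--\eqref{eq:inequalities4} by decomposing $\eta$ along $\xi$ and its $\nabla H(\xi)$-orthogonal complement and transferring \eqref{Hunifconvex} from $\partial\mathcal B_1^H$ by homogeneity, then integrate $A$ along the segment to obtain \eqref{eq:inequalities1}--\eqref{eq:inequalities2}—is the standard route in those references. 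Your treatment of \eqref{eq:inequalities3} and \eqref{eq:inequalities4} is sound (for \eqref{eq:inequalities4} the direct bounds \eqref{eq:BddH} and \eqref{eq:PropFinsler11} already give the result without decomposing $\eta,\eta'$, so that step is simpler than you anticipate).

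There is, however, a genuine gap in the passage from $A$ to \eqref{eq:inequalities2} in the regime $1<p<2$. You assert $\int_0^1|\eta_t|^{p-2}\,dt\asymp(|\eta|+|\eta'|)^{p-2}$ with $\eta_t=\eta'+t(\eta-\eta')$, but only the lower bound is true there; the upper bound fails whenever the segment passes near the origin. Take $\eta=-\eta'$: then $\int_0^1|\eta_t|^{p-2}\,dt=\frac{1}{p-1}|\eta|^{p-2}$, which is not $O\bigl((2|\eta|)^{p-2}\bigr)$ uniformly as $p\downarrow 1$. Since \eqref{eq:inequalities2} needs precisely this upper bound, your pointwise integrand estimate does not deliver it when $1<p<2$, and that range is required here because the paper works with $p>(2N+2)/(N+2)$, which admits $p<2$. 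The standard fix is a case split: if $|\eta-\eta'|\le\frac12\max(|\eta|,|\eta'|)$ the whole segment stays a fixed fraction of $\max(|\eta|,|\eta'|)$ away from the origin and the integral bound goes through; otherwise one estimates $|H^{p-1}(\eta)\nabla H(\eta)|+|H^{p-1}(\eta')\nabla H(\eta')|\lesssim\max(|\eta|,|\eta'|)^{p-1}$ and compares directly with $(|\eta|+|\eta'|)^{p-2}|\eta-\eta'|$. A related but smaller imprecision: for $p\ge 2$ the lower bound on $\int_0^1|\eta_t|^{p-2}\,dt$ does not follow from Jensen (which would give $|\tfrac{\eta+\eta'}{2}|^{p-2}$, possibly zero); one instead restricts to a subinterval of $t$ where $|\eta_t|\gtrsim\max(|\eta|,|\eta'|)$. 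Finally, you should note that $A(\eta_t)$ is undefined at the single $t$ (if any) where $\eta_t=0$; since $|\eta_t|^{p-2}$ is integrable for $p>1$, the fundamental-theorem-of-calculus identity still holds by a routine limiting argument, but this deserves a sentence.
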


\section{The Harnack comparison inequality}\label{Sec3}

This section is devoted to prove Theorem \ref{Harnack} and Theorem \ref{Principio forte}.
We show the main steps needed to prove a weak Harnack comparison inequality. In particular we derive the necessary estimates to apply Moser's iterative method. To complete the proof, we refer to \cite{DS2,SMS,Montoro}, where the details of Moser's technique are thoroughly presented, allowing us to avoid redundancy here.

We establish the following result:

\begin{thm}(Weak Harnack Comparison Inequality)\label{t1}
Suppose that assumptions $(h_H)$ and $(h_p)$ hold.
Let $u,v\in C^{1,\alpha}_{loc}(\Omega)$ and assume that either $u$ or $v$ is a weak  solution to \eqref{eq:Euler-Lagrange}, with $ q\geq\max\,\{p-1,1\}$. Assume  that $\overline{B(x_0,6\delta)}\subset\Omega'\subset\subset\Omega$ for some $\delta>0$ and that
\begin{equation}\label{disuguaglianzaipotesi}
-\Delta_p^Hu+a(x,u)H^q(\nabla u)-f(x,u)\leq -\Delta_p^Hv+a(x,v)H^{q}(\nabla v)-f(x,v),\quad u\leq v \quad  \text{in}\quad {B(x_0,6\delta)}.
\end{equation}
Then: 

\

\begin{itemize}
\item {
Case $(a):p\geq 2$.} Define
\[
\frac{1}{\bar{ 2}_p}=\frac{1}{2}-\frac{1}{N}+\frac{p-2}{p-1}\,\frac{1}{N}.
\]
Then, for every
\begin{equation}\nonumber
0<s<\dfrac{\bar 2_p}{2},
\end{equation}
there exists $C>0$ such that
\begin{equation}\label{tt1}
\|(v-u)\|_{L^s(B(x_0,2\delta))}\leq C\inf_{B(x_0,\delta)}(v-u)
\end{equation}
where $C=C(H,p,q,\delta,L, \|v\|_{L^{\infty}(\Omega')},  \|\nabla u\|_{L^{\infty}(\Omega')},  \|\nabla v\|_{L^{\infty}(\Omega')})$.

\

\item  {Case $(b):{(2N+2)}/{(N+2)}<p<2$.} Define
$$\frac{1}{\bar t^\sharp}=\frac{2p-3}{2(p-1)}$$
and let   $2^*$ be  the classical Sobolev exponent $2^*=2N/(N-2)$.
Then, for every
\begin{equation}\nonumber
0<s<\dfrac{2^*}{\bar t^\sharp},
\end{equation}
there exists $C>0$ such that
\begin{equation}\label{tt2}
\|(v-u)\|_{L^s(B(x_0,2\delta))}\leq C\inf_{B(x_0,\delta)}(v-u)
\end{equation}
where $C=C(H,p,q,\delta,L, \|v\|_{L^{\infty}(\Omega')},  \|\nabla u\|_{L^{\infty}(\Omega')},  \|\nabla v\|_{L^{\infty}(\Omega')})$.
\end{itemize}
\end{thm}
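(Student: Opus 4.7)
The plan is to adapt the Moser--Trudinger iteration of \cite{DS2,SMS,Montoro} to the present anisotropic setting with a first-order term. Set $w_\e := v - u + \e$ for $\e > 0$; after sending $\e \to 0$, this plays the role of a strictly positive difference that can be raised to arbitrary real powers. For a cutoff $\psi \in C^1_c(B(x_0,6\delta))$ and a parameter $\beta \in \R \setminus \{0,-1\}$, I would test the difference of the two weak inequalities implicit in \eqref{disuguaglianzaipotesi} against $\varphi := \psi^2 w_\e^\beta \geq 0$. By the monotonicity estimate \eqref{eq:inequalities1}, the principal term is bounded below by
$$C_1 |\beta| \int \psi^2 \rho\, w_\e^{\beta-1}|\nabla(v-u)|^2\, dx, \qquad \rho := (|\nabla u|+|\nabla v|)^{p-2},$$
which, up to the factor $|\beta|$, is the weighted Dirichlet energy of $w_\e^{(\beta+1)/2}$.

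The main technical point is to control the remaining contributions so as to extract a Caccioppoli/reverse-Hölder estimate. The cross-term generated by $\nabla \psi$ is absorbed via \eqref{eq:inequalities2} and Young's inequality. For the zeroth-order contributions, I would split
$$a(x,u)H^q(\nabla u) - a(x,v)H^q(\nabla v) = a(x,u)[H^q(\nabla u) - H^q(\nabla v)] + [a(x,u)-a(x,v)]H^q(\nabla v),$$
and combine with $f(x,v) - f(x,u)$; by $(h_p)(iii)$ and the $L^\infty$ bounds on $\nabla u,\nabla v$, this is estimated by $C\bigl(H^{q-1}(\nabla u + \theta \nabla w)|\nabla w| + w_\e\bigr)\psi^2 w_\e^\beta$. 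The assumption $q \geq \max\{p-1,1\}$ enters precisely here: it ensures that $H^{2(q-1)}\rho^{-1}$ is bounded, so that the $|\nabla w|$-piece can be absorbed via Cauchy--Schwarz into the principal weighted energy, while the $w_\e$-piece is handled by the weighted Poincaré inequality of Theorem \ref{Poincarepesata} and the smallness of $\delta$. One thus obtains the Caccioppoli-type estimate
$$\int \psi^2 \rho\, |\nabla(w_\e^{(\beta+1)/2})|^2\, dx \leq C(\beta)\int |\nabla \psi|^2 \rho\, w_\e^{\beta+1}\, dx + \text{l.o.t.}$$

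To turn this into a reverse Hölder inequality suitable for iteration, I would distinguish the two regimes of $p$. In Case (a), $p \geq 2$, Theorem \ref{Poincarepesata} applies directly and gives a gain factor $\bar 2_p/2$ in the Moser chain. In Case (b), $(2N+2)/(N+2) < p < 2$, the weight $\rho$ is singular on the critical set; I would Hölder-split
$$|\nabla(w_\e^{(\beta+1)/2})|^{\bar t^\sharp} = \bigl(\rho\,|\nabla(w_\e^{(\beta+1)/2})|^2\bigr)^{\bar t^\sharp/2}\cdot \rho^{-\bar t^\sharp/2},$$
and use the summability bound \eqref{stima peso locale} of $|\nabla u|^{-t}$ (valid for every $t < p-1$) to control the factor $\rho^{-\bar t^\sharp/2}$; the bookkeeping of exponents fixes $1/\bar t^\sharp = (2p-3)/(2(p-1))$, after which the classical Sobolev embedding delivers the gain $2^*/\bar t^\sharp$. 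Once a reverse Hölder inequality is available, iterating over a sequence of negative exponents $\beta \to -\infty$ produces an $L^\infty$ bound on $w^{-1}$ in $B(x_0,\delta)$, i.e.~a pointwise lower estimate for $\inf_{B(x_0,\delta)} w$ in terms of a negative $L^s$-norm. The estimates \eqref{tt1}--\eqref{tt2} are then closed by the Bombieri--Serrin bridge across $s=0$: the choice $\beta = -1$ in the Caccioppoli estimate yields $\int \psi^2 \rho|\nabla \log w|^2 \leq C\int |\nabla \psi|^2 \rho$, so that, by Remark \ref{zeromeans}, $\log w$ belongs to a weighted BMO class, and a John--Nirenberg argument produces $\|w\|_{L^s}\|w^{-1}\|_{L^s} \leq C$ for some $s > 0$; combining with the negative-exponent iteration yields the desired thesis. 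The hardest single step is the Hölder reshuffling of Case (b), where one must keep the exponents $t$ strictly below $p-1$ uniformly in $\beta$ while still absorbing the first-order perturbation generated by $a(x,u)H^q(\nabla u)$; the remaining iteration and bridge follow the template of \cite{DS2,SMS,Montoro}.
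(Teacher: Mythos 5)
Your Caccioppoli step and the reverse--H\"older machinery are essentially the paper's: testing with $\eta^2 w_\tau^\beta$, using \eqref{eq:inequalities1}--\eqref{eq:inequalities2} for the principal and cross terms, splitting the zeroth-order term exactly as you do, absorbing the gradient piece because $q\ge p/2$ (and noting $q\ge\max\{p-1,1\}$ implies $q\ge p/2$ in both regimes), and then gaining summability by the weighted Poincar\'e inequality when $p\ge 2$ and by classical Sobolev plus the $L^t$-integrability of $\rho$ when $(2N+2)/(N+2)<p<2$. That part is faithful to the paper's argument.

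The genuine gap is in your crossing of $s=0$. You propose: from $\int\eta^2\rho|\nabla\log w_\tau|^2\le C\int\rho|\nabla\eta|^2$, conclude that $\log w_\tau$ lies in a ``weighted BMO class'' and invoke a John--Nirenberg lemma. In this degenerate setting that step does not go through: the weighted Poincar\'e inequality of Theorem \ref{Poincarepesata} is an inequality on a fixed subdomain with a constant $C_\rho$ that is not scale-invariant, and there is no John--Nirenberg lemma available for the weight $\rho=(|\nabla u|+|\nabla v|)^{p-2}$ (which vanishes on $Z_u\cap Z_v$ for $p\ge 2$ and blows up there for $p<2$, and which is not known to satisfy any Muckenhoupt-type condition). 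The paper avoids this precisely by following Trudinger's device: instead of John--Nirenberg, it uses the test functions $\phi=\eta^2 w_\tau^{-1}(|\tilde w_\tau|^\beta+(2\beta)^\beta)$ with $\beta\ge 1$, where $\tilde w_\tau=\log w_\tau$ has been normalised to zero mean, and iterates in $\beta$ (using only the weighted Sobolev inequality for zero-mean functions) to obtain the two-sided moment bound \eqref{secondostep} directly. Without this replacement, your proposal does not close the bridge between positive and negative exponents.

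A minor but worth-flagging confusion: in Case (b) you H\"older-split $|\nabla\tilde w_\tau|^{\bar t^\sharp}=(\rho|\nabla\tilde w_\tau|^2)^{\bar t^\sharp/2}\rho^{-\bar t^\sharp/2}$ and say \eqref{stima peso locale} controls $\rho^{-\bar t^\sharp/2}$. For $p<2$ it is $\rho$, not $\rho^{-1}$, that is singular; $\rho^{-1}=(|\nabla u|+|\nabla v|)^{2-p}\in L^\infty$ trivially, and the integrability estimate $\rho\in L^t$ with $t=\theta(p-1)/(2-p)$, $0<\theta<1$, is actually needed on the right-hand side of the Caccioppoli inequality (the term $\int\rho\tilde w_\tau^2|\nabla\eta|^2$), which is where the paper applies it. Your final exponent $\bar t^\sharp=2(p-1)/(2p-3)$ is correct, but the factor needing the summability estimate is reversed.
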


The proof is based on Moser's iteration scheme, as described in \cite{MO}. Here, we use the improved approach introduced by Trudinger \cite{Tru}, which relies only on weighted Sobolev inequalities and avoids using the John-Nirenberg Lemma. In the proof, we obtain the two main estimates needed to start Moser's iterative method. For the full details of the technique, we refer to \cite{DS2,SMS,Montoro}.

\begin{proof}
In the sequel, to simplify the notation, we rename the subdomain $\Omega '$ as $\Omega$.

\textbf{First Estimate:} Let us consider the function $w_{\tau}=v-u+\tau$, where $\tau >0$ in $\Omega$, and we define 
\begin{equation}
    \phi_{\tau}=\eta^2w_{\tau}^{\beta}
    \end{equation}
with $\beta<0$ and $\eta\in C_0^1(B(x_0,6\delta))$. So
\begin{equation}\label{gradiente}
    \nabla\phi_{\tau}=2\eta w_{\tau}^{\beta}\nabla\eta+\beta\eta^2 w_{\tau}^{\beta -1}\nabla w_{\tau}.
\end{equation}
Using $\phi_{\tau}$ as positive test function in \eqref{disuguaglianzaipotesi}, we obtain
\begin{equation}\label{primaequazione}
\begin{split}
  \mathcal{J}_1:=&\int_{\Omega}\langle H^{p-1}(\nabla u)\nabla H(\nabla u)-H^{p-1}(\nabla v)\nabla H(\nabla v), \nabla\phi_{\tau}\rangle \,dx\\
\leq &\int_{\Omega}\left( a(x,v)H^{q}(\nabla v)-a(x,u)H^q(\nabla u)\right)\phi_{\tau}\,dx+\int_{\Omega}(f(x,u)-f(x,v))\phi_{\tau}\,dx\\
:=&\mathcal{J}_2+\mathcal{J}_3.
\end{split}
\end{equation}

\noindent {Case $(a)$: $p\geq 2$.}
Now we estimate $\mathcal{J}_1$. By \eqref{gradiente}, we get

\begin{equation}
    \begin{split}
       \mathcal{J}_1&=
        \int_{\Omega}\langle  H^{p-1}(\nabla u)\nabla H(\nabla u)-H^{p-1}(\nabla v)\nabla H(\nabla v), \nabla w_{\tau}\rangle \beta\eta^2w_{\tau}^{\beta -1} \,dx \\ &+2\int_{\Omega}\langle H^{p-1}(\nabla u)\nabla H(\nabla u)-H^{p-1}(\nabla v)\nabla H(\nabla v), \nabla\eta\rangle\eta w_{\tau}^{\beta} \,dx \\
&:=\mathcal{\tilde J}_1+\mathcal{J}_4
    \end{split}
\end{equation}

By Lemma \ref{stimedamascellianisotrope} (in particular see \eqref{eq:inequalities1}), we get  

\begin{equation}\label{secondaequazione}
\begin{split}
    C_1|\beta|&\int_{\Omega} (|\nabla u|+|\nabla v|)^{p-2} |\nabla (u-v)|^2 \eta^2w_{\tau}^{\beta -1}\,dx\\&\le \mathcal{\tilde J}_1\le \mathcal{ J}_2+\mathcal{ J}_3-\mathcal{ J}_4 \\ &= \int_{\Omega}\left( a(x,v)H^{q}(\nabla v)-a(x,u)H^q(\nabla u)\right)\phi_{\tau}\,dx+\int_{\Omega}(f(x,u)-f(x,v))\phi_{\tau}\,dx \\ &-2 \int_{\Omega}\langle H^{p-1}(\nabla u)\nabla H(\nabla u)-H^{p-1}(\nabla v)\nabla H(\nabla v), \nabla\eta\rangle\eta w_{\tau}^{\beta} \,dx. 
    \end{split}
\end{equation}

Using the weighted Young's inequality and Lemma \ref{stimedamascellianisotrope} (see \eqref{eq:inequalities2}), the last term $\mathcal{J}_4$ of the previous inequality becomes

\begin{equation}\label{secondaequazione2}
    \begin{split}
    \mathcal{J}_4 &\le 2C_2 \int_{\Omega} (|\nabla u|+|\nabla v|)^{p-2} |\nabla (u-v)| |\nabla \eta| \eta w_{\tau}^{\beta } \,dx \\ &\le \epsilon  \int_{\Omega} (|\nabla u|+|\nabla v|)^{p-2} |\nabla (u-v)|^2 \eta^2w_{\tau}^{\beta -1} \,dx\\&+\frac{C(p,H)}{\epsilon}\int_{\Omega} (|\nabla u|+|\nabla v|)^{p-2}|\nabla \eta|^2 w_{\tau}^{\beta+1} \,dx, 
    \end{split}
\end{equation}
where $C(p,H)$ is a positive constant.

Now we set $\rho=(|\nabla u|+|\nabla v|)^{p-2}$. For $\epsilon =\frac{C_1|\beta|}{2}$, by \eqref{secondaequazione} and \eqref{secondaequazione2}, we get 

\begin{equation}\label{terzaequazione}
\begin{split}
    \frac{C_1|\beta|}{2}& \int_\Omega \rho |\nabla (u-v)|^2\eta ^2 w_{\tau}^{\beta-1} \,dx  \\\le & \int_{\Omega}\left( a(x,v)H^{q}(\nabla v)-a(x,u)H^q(\nabla u)\right)\phi_{\tau}\,dx\\&+\int_{\Omega}(f(x,u)-f(x,v))\phi_{\tau}\,dx +\frac{C(p,H)}{|\beta|}\int_\Omega \rho |\nabla \eta|^2 w_{\tau}^{\beta+1} \,dx.
    \end{split}
\end{equation}

By assumptions $(h_p)$, since $\tau$ is a positive constant, using  the mean value theorem and by \eqref{H equiv euclidea} and \eqref{eq:BddH}, the first term on the right side of  \eqref{terzaequazione} can be estimated as

\begin{equation}\label{stimaa}
    \begin{split}
       &\int_{\Omega}\left( a(x,v)H^{q}(\nabla v)-a(x,u)H^q(\nabla u)\right)\phi_{\tau}\,dx \\&=\int_{\Omega}a(x,v)(H^{q}(\nabla v)-H^q(\nabla u))\phi_{\tau}\,dx+\int_\Omega (a(x,v)-a(x,u))H^q(\nabla u)\phi_{\tau}\,dx \\ &\le  C(\|v\|_{L^{\infty}(\Omega)})\int_\Omega (H^{q}(\nabla v)-H^q(\nabla u))\eta^2w_{\tau}^{\beta}\\&+ C(q,H,\|v\|_{L^{\infty}(\Omega)},\|\nabla u\|_{L^{\infty}(\Omega)})\int_\Omega (v-u)\phi_{\tau}\,dx \\ &  \le C(q,H,\|v\|_{L^{\infty}(\Omega)})\int_\Omega (|\nabla u|+|\nabla v|)^{q-1}|\nabla (v-u)|\eta^2 w_{\tau}^{\beta}\,dx\\&+C(q,H,\|v\|_{L^{\infty}(\Omega)},\|\nabla u\|_{L^{\infty}(\Omega)})\int_\Omega w_{\tau}\phi_{\tau}\,dx\\&=C(q,H,\|v\|_{L^{\infty}(\Omega)})\int_\Omega (|\nabla u|+|\nabla v|)^{q-1}|\nabla (v-u)|\eta^2 w_{\tau}^{\beta}\frac{(|\nabla u|+|\nabla v|)^{(p-2)/2}}{(|\nabla u|+|\nabla v|)^{(p-2)/2}}\,dx\\&+C(q,H,\|v\|_{L^{\infty}(\Omega)},\|\nabla u\|_{L^{\infty}(\Omega)})\int_\Omega w_{\tau}\phi_{\tau}\,dx
       \\ &\le \epsilon \int_\Omega \rho H(\nabla (u-v))^2\eta ^2 w_{\tau}^{\beta-1} \,dx
       \\ &+\frac{C(p,q,H,\|v\|_{L^{\infty}(\Omega)},\|\nabla u\|_{L^{\infty}(\Omega)},\|\nabla v\|_{L^{\infty}(\Omega)})}{\epsilon}\int_\Omega \eta^2w_{\tau}^{\beta+1}\,dx\\&+C(q,H,\|v\|_{L^{\infty}(\Omega)},\|\nabla u\|_{L^{\infty}(\Omega)})\int_\Omega \eta^2w_{\tau}^{\beta+1} \,dx
    \end{split}
\end{equation}
 
where in the last inequality we have used the weighted Young's inequality and the fact that $q \ge p/2$, for $p\ge 2$.

By assumptions $(h_p)$ and since $\tau$ is a positive constant, the second term on the right-hand side of \eqref{terzaequazione} becomes 

\begin{equation}\label{stimaf}
    \int_\Omega (f(x,u)-f(x,v))\phi_{\tau} \,dx\le L(\|v\|_{L^{\infty}(\Omega)})\int_\Omega \eta^2w_{\tau}^{\beta+1} \,dx.
\end{equation}

By \eqref{stimaa} and \eqref{stimaf}, for $\epsilon=\frac{C_1|\beta|}{4}$, \eqref{terzaequazione} becomes

\begin{equation}\label{quartaequazione}
     \int_\Omega \rho |\nabla w_{\tau}|^2\eta ^2 w_{\tau}^{\beta-1} \,dx\le \frac{\hat C}{|\beta|}\left(1+\frac{1}{|\beta|}\right)\int_\Omega w_{\tau}^{\beta+1}(\eta^2+\rho|\nabla \eta|^2)\,dx,
\end{equation}

where $\hat C={\hat C(p,q,H,L,\|v\|_{L^{\infty}(\Omega)},\|\nabla u\|_{L^{\infty}(\Omega)},\|\nabla v\|_{L^{\infty}(\Omega)})}$ is a positive constant.

Let us now define 
\begin{equation}\label{deinizionewtau}
\tilde w_{\tau}=\begin{cases}
w_{\tau}^{\frac{\beta +1}{2}} & \hbox{if}\;\;\;\beta\neq -1,\\
\log(w_{\tau}) &\hbox{if}\;\;\;\beta =-1
\end{cases}
\end{equation}
and
\begin{equation}\label{definizioner}
r:=\beta +1.
\end{equation}
If $\beta\neq -1$, by \eqref{deinizionewtau}, the estimate \eqref{quartaequazione} becomes 

\begin{equation}\label{quintaequazione}
\begin{split}
&\int_{\Omega}\rho\eta^2|\nabla \tilde w_{\tau}|^2\,dx=\left (\dfrac{\beta +1}{2}\right )^2\int_{\Omega}\rho\eta^2|\nabla  w_{\tau}|^2w_\tau ^{\beta-1}\,dx\\& \le \frac{\hat C}{|\beta|}\left(1+\frac{1}{|\beta|}\right)\left (\dfrac{\beta +1}{2}\right )^2\int_{\Omega}\tilde w_{\tau}^2(\eta ^2+\rho |\nabla \eta|)^2 \,dx.
\end{split}
\end{equation}

Now let us consider $2<\nu< \bar 2_p$.
Since $\eta \tilde w_\tau\in H^{1,2}_{\rho,0}(\Omega)$, from Theorem \ref{Poincarepesata} we have 
\begin{equation}\label{oraprono}
\begin{split}
&\|\eta \tilde{w}_{\tau}\|^2_{L^{\nu}(\Omega)}\le C_\rho\int_\Omega \rho |\nabla (\eta \tilde w_\tau)|^2\,dx\\&\le 2 C_{\rho}\int_{\Omega}\rho\tilde{w}_{\tau}^2|\nabla\eta|^2+\rho\eta^2|\nabla\tilde{w}_{\tau}|^2\,dx.
\end{split}
\end{equation}

Now we observe that the quantity $ \dfrac{1}{|\beta|}\left (1+\dfrac{1}{|\beta|}\right)$ is bounded if $|\beta|\geq \tilde L>0$, and so we include in the constant $\hat C$. Therefore, since $\rho\in L^{\infty}(\Omega)$, for $p\ge 2$, and using \eqref{quintaequazione} in \eqref{oraprono}, we get
\begin{equation}
\label{norma}
\|\eta \tilde{w}_{\tau}\|^2_{L^{\nu}(\Omega)}\le  \hat C r^2\|\tilde{w}_{\tau}(\eta+|\nabla\eta|)\|^2_{L^2(\Omega)},
\end{equation}
up to redefining the constant $\hat C$.

\textbf{Second Estimate: }To use the Moser's iterative scheme we need to show that there exists $r_0 > 0$ and a positive constant $C$ such that 
\begin{equation}\label{secondostep}
    \left(\int_{B(x_0,5/2\delta)} w_\tau^{r_0}\right)^{\frac{1}{r_0}}\le C\left(\int_{B(x_0,5/2\delta)} w_\tau^{-r_0}\right)^{\frac{1}{-r_0}}
\end{equation}

We define 
\begin{equation}
    \tilde w_{\tau}=\log(w_{\tau}).
\end{equation}

Using \eqref{quartaequazione} with $\beta=-1$, we have 

\begin{equation}\label{betaugualemeno1}
    \int_\Omega \rho |\nabla \tilde w_{\tau}|^2\eta ^2  \,dx\le \hat C\int_\Omega (\eta^2+\rho|\nabla \eta|^2)\,dx,
\end{equation}

Supposing $\eta=1$ in $B(x_0,5\delta)$, since $\rho\in L^\infty(\Omega)$ we have

\begin{equation}\label{peso}
    \int_{B(x_0,5\delta)} \rho |\nabla \tilde w_{\tau}|^2\le \hat C,
\end{equation}
where $\hat C={\hat C(p,q,H,L,\|v\|_{L^{\infty}(\Omega)},\|\nabla u\|_{L^{\infty}(\Omega)},\|\nabla v\|_{L^{\infty}(\Omega)})}$ is a positive constant. Replacing $w_{\tau}$ with $w_{\tau}/k$ with $k$ defined by $k:=e^{\frac{1}{|B(x_0,5\delta)|}\int_{B(x_0,5\delta)}\log  w_\tau \,dx},$ we can also suppose that $\tilde w_\tau$ has zero mean on $B(x_0,5\delta)$. Therefore, by the weighted Sobolev inequality, available for functions with zero mean (see Remark \ref{zeromeans}), and by \eqref{peso}, we get 

\begin{equation}
\|\tilde{w}_\tau\|_{L^{\nu}(B(x_0,5\delta))}\le C_\rho \int_\Omega \rho |\nabla \tilde w_\tau|^2\leq C_\rho\hat C.
\end{equation}

We observe that the constant $k$ does not modify the following calculations.

Let us we set 
 \begin{equation}\label{fi}
 \phi=\eta^2\dfrac{1}{w_{\tau}}(|\tilde{w}_\tau|^{\beta}+(2\beta)^{\beta})\qquad \beta\geq 1,
 \end{equation}
 with $\eta\geq 0$ and $\eta\in C_0^1(B(x_0,5\delta))$. So
\begin{equation}\label{derifi}
\nabla\phi=\dfrac{2\eta}{w_{\tau}}(|\tilde{w}_\tau|^{\beta}+(2\beta)^{\beta})\nabla\eta +\eta^2\dfrac{1}{w_{\tau}^2}(\beta sign(\tilde{w}_\tau)|\tilde{w}_\tau|^{\beta -1}-|\tilde{w}_\tau|^{\beta}-(2\beta)^{\beta})\nabla w_{\tau}
\end{equation}

Using $\phi$ as a positive test function in \eqref{disuguaglianzaipotesi}, we get 

\begin{equation}\label{1equazione}
\begin{split}
  \int_{\Omega}&\langle H^{p-1}(\nabla v)\nabla H(\nabla v)-H^{p-1}(\nabla u)\nabla H(\nabla u), \nabla\phi\rangle \,dx\\
+ &\int_{\Omega}\left( a(x,v)H^{q}(\nabla v)-a(x,u)H^q(\nabla u)\right)\phi \,dx\ge\int_{\Omega}(f(x,v)-f(x,u))\phi \,dx.
\end{split}
\end{equation}

Using \eqref{fi} and \eqref{derifi} in  \eqref{1equazione} we have 

\begin{equation}\label{testthen}
 \begin{split} 
 0\ge -\int_{\Omega}\langle H^{p-1}(\nabla v)\nabla H(\nabla v)-H^{p-1}(\nabla u)\nabla H(\nabla u), \nabla\eta\rangle \frac{2\eta}{w_{\tau}}(|w_{\tau}|^{\beta}+(2\beta)^{\beta}) \,dx\\ -\int_{\Omega}\langle H^{p-1}(\nabla v)\nabla H(\nabla v)-H^{p-1}(\nabla u)\nabla H(\nabla u), \nabla w_{\tau}\rangle\times \\ \times \eta^2\dfrac{1}{w_{\tau}^2}(\beta sign(\tilde{w}_\tau)|\tilde{w}_\tau|^{\beta -1}-|\tilde{w}_\tau|^{\beta}-(2\beta)^{\beta}) \,dx \\-\int_{\Omega}\left( a(x,v)H^{q}(\nabla v)-a(x,u)H^q(\nabla u)\right)\eta^2\dfrac{1}{w_{\tau}}(|\tilde{w}_\tau|^{\beta}+(2\beta)^{\beta}) \,dx\\ + \int_{\Omega}(f(x,v)-f(x,u))\eta^2\dfrac{1}{w_{\tau}}(|\tilde{w}_\tau|^{\beta}+(2\beta)^{\beta}) \,dx \\=:-(I_1+I_2+I_3)+I_4.\qquad\qquad\qquad &
 \end{split}
\end{equation}

By Lemma \ref{stimedamascellianisotrope} (in particular see \eqref{eq:inequalities2}), we get

\begin{equation}\label{stimaI_1}
\begin{split}
    I_1&\le C_2\int_{\Omega}(|\nabla u|+|\nabla v|)^{p-2} |\nabla w_{\tau}|\dfrac{2\eta}{w_{\tau}}(|\tilde{w}_\tau|^{\beta}+(2\beta)^{\beta})|\nabla\eta|\,dx
    \\&=C_2\int_{\Omega}\rho |\nabla w_{\tau}|\dfrac{2\eta}{w_{\tau}}(|\tilde{w}_\tau|^{\beta}+(2\beta)^{\beta})|\nabla\eta|\,dx,
    \end{split}
\end{equation}

where we set $\rho:=(|\nabla u|+|\nabla v|)^{p-2}.$
In the sequel, if $\beta\ge 1$, we will make repeated use of the following inequalities

\begin{equation}\label{desi1}
2|\tilde{w}_\tau|^{\beta -1}\leq\dfrac{\beta -1}{\beta}|\tilde{w}_\tau|^{\beta}+\dfrac{1}{\beta}(2\beta)^{\beta}\leq|\tilde{w}_\tau|^{\beta}+(2\beta)^{\beta},
\end{equation}
or
\begin{equation}\label{desi2}
-\beta sign(\tilde{w}_\tau)|\tilde{w}_\tau|^{\beta -1}+|\tilde{w}_\tau|^{\beta}+(2\beta)^{\beta}\geq\beta |\tilde{w}_\tau|^{\beta -1}\ge 0.
\end{equation}

Using Lemma \ref{stimedamascellianisotrope} (in particular see \eqref{eq:inequalities1}) and by \eqref{desi2}, $I_2$ can be estimated in the following way
\begin{equation}\label{stimaI_2}
\begin{split}
    -I_2&\ge C_1 \int_\Omega (|\nabla u|+|\nabla v|)^{p-2} |\nabla w_\tau|^2\frac{\eta^2}{w_\tau^2}(+\beta |\tilde w_\tau|^{\beta-1})\,dx\\&=C_1\int_\Omega \rho |\nabla \tilde w_\tau|^2\eta^2(+\beta |\tilde w_\tau|^{\beta-1})\,dx.
    \end{split}
\end{equation}

For the term $I_3$, we note that 

\begin{equation}\label{terminei3}
\begin{split}
    I_3&=\int_{\Omega}a(x,v)(H^{q}(\nabla v)-H^q(\nabla u))\eta^2\dfrac{1}{w_{\tau}}(|\tilde{w}_\tau|^{\beta}+(2\beta)^{\beta}) \,dx\\&+\int_{\Omega}( a(x,v)-a(x,u))H^q(\nabla u)\eta^2\dfrac{1}{w_{\tau}}(|\tilde{w}_\tau|^{\beta}+(2\beta)^{\beta}) \,dx\\&\qquad\qquad\qquad\qquad:=I_{3,1}+I_{3,2}.
    \end{split}
\end{equation}

Using assumptions $(h_p)$, by \eqref{H equiv euclidea} and the fact that $\tau$ is a positive constant, it follows

\begin{equation}\label{stimaI_3,2}
    I_{3,2}\le C(q,L,H,\|\nabla u\|_{L^{\infty}(\Omega)})\int_\Omega\eta^2(|\tilde{w}_\tau|^{\beta}+(2\beta)^{\beta}) \,dx,
\end{equation}
where $C(q,L,H,\|\nabla u\|_{L^{\infty}(\Omega)})$ is a positive constant.
By assumptions $(h_p)$, from mean value theorem, by \eqref{H equiv euclidea}, \eqref{eq:BddH} and since $q\ge p-1$, we obtain

\begin{equation}
    \begin{split}
    I_{3,1}&\le C(q,H,\|v\|_{L^{\infty}(\Omega)})\int_{\Omega}(|\nabla v|+|\nabla u|)^{q-1}|\nabla w_\tau|\eta^2\dfrac{1}{w_{\tau}}(|\tilde{w}_\tau|^{\beta}+(2\beta)^{\beta})\times \\& \qquad\qquad\qquad\qquad\qquad\qquad\qquad\qquad\qquad\times\frac{(|\nabla v|+|\nabla u|)^{p-2}}{(|\nabla v|+|\nabla u|)^{p-2}} \,dx \\ &\le {C(p,q,H,\|v\|_{L^{\infty}(\Omega)},\|\nabla u\|_{L^{\infty}(\Omega)},\|\nabla v\|_{L^{\infty}(\Omega)})}\int_\Omega \rho |\nabla \tilde w_{\tau}|\eta^2 |\tilde w_{\tau}|^{\beta}\,dx\\&+{C(p,q,H,\|v\|_{L^{\infty}(\Omega)},\|\nabla u\|_{L^{\infty}(\Omega)},\|\nabla v\|_{L^{\infty}(\Omega)})}\int_\Omega \rho |\nabla \tilde w_{\tau}|\eta^2 (2\beta)^{\beta}\,dx
    \end{split}
\end{equation}
where $C(p,q,H,\|v\|_{L^{\infty}(\Omega)},\|\nabla u\|_{L^{\infty}(\Omega)},\|\nabla v\|_{L^{\infty}(\Omega)})$ is a positive constant.

Using the weighted Young inequality and the standard Young inequality, we get 

\begin{equation}\label{stimaI_3,1}
\begin{split}
    I_{3,1}&\le \epsilon C \int_\Omega \rho |\nabla \tilde w_{\tau}|^2\eta^2|\tilde w_{\tau}|^{\beta-1}\,dx +\frac{C}{\epsilon}\int_\Omega \rho \eta^2|\tilde w_{\tau}|^{\beta+1}\,dx \\&+C\int_\Omega \rho |\nabla \tilde w_{\tau}|^2\eta^2 (2\beta)^\beta\,dx+C\int_\Omega \rho \eta^2 (2\beta)^{\beta}\,dx,
    \end{split}
\end{equation}
where $C={C(p,q,H,\|v\|_{L^{\infty}(\Omega)},\|\nabla u\|_{L^{\infty}(\Omega)},\|\nabla v\|_{L^{\infty}(\Omega)})}$ is a positive constant.

Using \eqref{stimaI_3,2} and \eqref{stimaI_3,1} in the estimate \eqref{terminei3}, the term $I_3$ can be estimated as 
\begin{equation}\label{stimaI_3}
\begin{split}
    I_3&\le C\int_\Omega\eta^2(|\tilde{w}_\tau|^{\beta}+(2\beta)^{\beta})+\epsilon C \int_\Omega \rho |\nabla \tilde w_{\tau}|^2\eta^2|\tilde w_{\tau}|^{\beta-1}\,dx +\frac{C}{\epsilon}\int_\Omega \rho \eta^2|\tilde w_{\tau}|^{\beta+1}\,dx \\&+C\int_\Omega \rho |\nabla \tilde w_{\tau}|^2\eta^2 (2\beta)^{\beta}\,dx+C\int_\Omega \rho \eta^2 (2\beta)^{\beta}\,dx,
    \end{split}
\end{equation}
where $C={C(p,q,L,H,\|v\|_{L^{\infty}(\Omega)},\|\nabla u\|_{L^{\infty}(\Omega)},\|\nabla v\|_{L^{\infty}(\Omega)})}$ is a positive constant.

For the  term $I_4$, by assumptions $(h_p)$ and since $\tau$ is a positive constant, it follows 
\begin{equation}\label{stimaI_4}
\begin{split}
I_4&=\int_{\Omega}(f(x,v)-f(x,u))\eta^2\dfrac{1}{w_{\tau}}(|\tilde{w}_\tau|^{\beta}+(2\beta)^{\beta}) \,dx\\&\leq L(\|v\|_{L^{\infty}(\Omega)})\int_{\Omega}\eta^2(|\tilde{w}_\tau|^{\beta}+(2\beta)^{\beta}) \,dx.
\end{split}
\end{equation}

Using the estimates on terms $I_1,I_2,I_3$ and $I_4$ (see  \eqref{stimaI_1}, \eqref{stimaI_2}, \eqref{stimaI_3} and \eqref{stimaI_4}), in the inequality \eqref{testthen} we obtain
\begin{equation}\label{settimaequazione}
    \begin{split}
     &\beta\int_\Omega \rho |\nabla \tilde w_\tau|^2\eta^2 |\tilde w_\tau|^{\beta-1}\\&\le \int_{\Omega}\rho |\nabla w_{\tau}|\dfrac{2\eta}{w_{\tau}}(|\tilde{w}_\tau|^{\beta}+(2\beta)^{\beta})|\nabla\eta| \,dx\\&+\epsilon C \int_\Omega \rho |\nabla \tilde w_{\tau}|^2\eta^2|\tilde w_{\tau}|^{\beta-1}\,dx +\frac{C}{\epsilon}\int_\Omega \rho \eta^2|\tilde w_{\tau}|^{\beta+1}\,dx \\&+C\int_\Omega \rho |\nabla \tilde w_{\tau}|^2\eta^2 (2\beta)^{\beta}\,dx+C\int_\Omega \rho \eta^2 (2\beta)^{\beta}\,dx\\&+C\int_{\Omega}\eta^2(|\tilde{w}|^{\beta}+(2\beta)^{\beta}) \,dx,
    \end{split}
\end{equation}
where $C={C(p,q,H,L,\|v\|_{L^{\infty}(\Omega)},\|\nabla u\|_{L^{\infty}(\Omega)},\|\nabla v\|_{L^{\infty}(\Omega)})}$ is a positive constant.

Using the weighted Young's inequality and the standard Young's inequality for the first term on the right-hand side of \eqref{settimaequazione}, we get

\begin{equation}\label{okay}
    \begin{split}
        &\int_{\Omega}\rho |\nabla w_{\tau}|\dfrac{2\eta}{w_{\tau}}(|\tilde{w}_\tau|^{\beta}+(2\beta)^{\beta})|\nabla\eta| \,dx
        \\& =\int_{\Omega}\rho |\nabla \tilde w_{\tau}|2\eta(|\tilde{w}_\tau|^{\beta}+(2\beta)^{\beta})|\nabla\eta| \,dx 
        \\ &\le \epsilon \int_\Omega \rho \eta^2|\tilde w_\tau|^{\beta-1}|\nabla \tilde w_\tau|^2+\frac{1}{\epsilon}\int_\Omega\rho |\tilde w_\tau|^{\beta+1}|\nabla \eta|^2 \,dx \\&+\int_\Omega \rho (2\beta)^{\beta}|\nabla \eta|^2 \,dx+\int_\Omega \rho \eta^2 (2\beta)^{\beta}|\nabla \tilde w_\tau|^2 \,dx.
    \end{split}
\end{equation}

By \eqref{peso}, \eqref{okay}, for $\epsilon=\epsilon(\beta)$ small (recalling $\beta\ge 1$), and since $\rho\in L^{\infty}(\Omega)$, the inequality \eqref{settimaequazione} becomes 

\begin{eqnarray}\label{caos1}
&&\beta\int_{\Omega}\rho |\nabla \tilde w_{\tau}|^2\eta^2|\tilde{w}_\tau|^{\beta -1}\,dx\\\nonumber
&\leq&C\int_{\Omega}\rho(|\tilde{w}_\tau |^{\beta +1}+(2\beta)^{\beta})|\nabla\eta|^2\,dx+C\int_{\Omega}\eta^2(|\tilde{w}_\tau|^{\beta}+(2\beta)^{\beta}) \,dx\\\nonumber
&+&C\int_{\Omega}\rho\eta^2|\tilde{w}_\tau|^{\beta+1}\,dx+C(2\beta)^{\beta},
\end{eqnarray}
where $C=C(p,q,H,\delta, L,\|v\|_{L^{\infty}(\Omega)},\|\nabla u\|_{L^{\infty}(\Omega)},\|\nabla v\|_{L^{\infty}(\Omega)})$ is a  positive constant.

Now, since the support of $\eta$ depending on $\delta$, we observe that exist a constant $C$, depending on $\delta$ such that 

$$(2\beta)^\beta=C(\delta)\int_{B(x_0,5\delta)}(2\beta)^{\beta}\eta^2\,dx.$$
Therefore, the estimate \eqref{caos1} becomes 
\begin{eqnarray}\label{caos1irrazionale}
&&\int_{\Omega}\rho |\nabla \tilde w_{\tau}|^2\eta^2|\tilde{w}_\tau|^{\beta -1}\,dx\\\nonumber
&\leq&C\left  (\int_{\Omega}\rho(|\tilde{w}_\tau |^{\beta +1}+(2\beta)^{\beta})|\nabla\eta|^2\,dx+\int_{\Omega}\eta^2(|\tilde{w}_\tau|^{\beta}+(2\beta)^{\beta}) \,dx+\int_{\Omega}\rho\eta^2|\tilde{w}_\tau|^{\beta+1}\,dx\right).
\end{eqnarray}

Now, by Young's inequality, we note that 
\begin{equation}\label{notiamo}
    \int_{\Omega}\eta^2(|\tilde{w}_\tau|^{\beta}+(2\beta)^{\beta}) \,dx\le 2\int_{\Omega}\eta^2(|\tilde{w}_\tau|^{\beta+1}+(2\beta)^{\beta}) \,dx.
\end{equation}

By   \eqref{notiamo} and since $\rho\in L^{\infty}(\Omega)$, the inequality  \eqref{caos1irrazionale} can be written as
\begin{equation}\label{barrafael}
\int_{\Omega}\rho|\nabla \tilde w_{\tau}|^2\eta^2|\tilde{w}_\tau|^{\beta -1}\,dx\leq C\int_{\Omega}(|\tilde{w}_\tau|^{\beta +1}+(2\beta)^{\beta})(\eta^2+\rho |\nabla\eta|^2)\,dx,
\end{equation}
where $C=C(p,q,H,\delta, L,\|v\|_{L^{\infty}(\Omega)},\|\nabla u\|_{L^{\infty}(\Omega)},\|\nabla v\|_{L^{\infty}(\Omega)})$ is a positive constant.

We note that \eqref{barrafael} is similar to \eqref{quartaequazione}, except for the extra term $(2\beta)^\beta$. Actually from \eqref{barrafael} follows the inequality \eqref{secondostep}.

Hence the Moser's iterative technique applies: we conclude the proof and we refer to \cite{DS2,SMS,Montoro} for all details.\\

\noindent {  Case $(b)$: ${(2N+2)}/{(N+2)}<p<2$.} 

Arguiung exactly as in case $(a)$, we are able to obtain $\eqref{quartaequazione}$. Using \eqref{deinizionewtau}, we still get \eqref{quintaequazione} if $\beta \neq 1$ or \eqref{betaugualemeno1} if $\beta =-1$.

Since $1<p<2$ and $u,v\in C^1(\overline{\Omega})$, using the classical Sobolev inequality we get
\begin{equation}\label{sobolevclassica}
\begin{split}
\|\eta \tilde{w}_{\tau}\|^2_{L^{2^*}(\Omega)}&\le C_S\int_\Omega |\nabla (\eta \tilde w_\tau)|^2\,dx\\ &\le C_sC(\|\nabla u\|_{L^{\infty}(\Omega)},\|\nabla v\|_{L^{\infty}(\Omega)})\int_\Omega \rho |\nabla (\eta \tilde w_\tau)|^2\,dx,
\end{split}
\end{equation}
where $2^*$ is the classical Sobolev exponent, $C_S$ is the classical Sobolev constant and $\\ C(\|\nabla u\|_{L^{\infty}(\Omega)},\|\nabla v\|_{L^{\infty}(\Omega)})$ is a positive constant. Therefore (see \eqref{norma}) we deduce that 
\begin{equation}
\label{norma2star}
\|\eta \tilde{w}_{\tau}\|^2_{L^{2^*}(\Omega)}\le  \hat C r^2\int_\Omega \tilde w_\tau^2(\eta^2+\rho|\nabla \eta|^2)\,dx.
\end{equation}

Now, from Theorem \ref{local1} (in particular see \eqref{stima peso locale}), we have that $\rho \in L^t(\Omega)$, with $\\t:=(p-1)/(2-p)\theta$, for $0<\theta<1$. Therefore 
\begin{equation}\label{cosecose}
    \|\eta \tilde{w}_{\tau}\|^2_{L^{2^*}(\Omega)}\le  \hat C r^2\|\rho\|_{L^t(\Omega)}\|\tilde w_\tau(\eta+|\nabla \eta|)\|_{L^{t^\#}(\Omega)},
\end{equation}

where $t^\#=2t/(t-1)$. We now set $\chi '=2^*/t^\#$, and in order to apply the Moser's iteration we only need $\chi '>1$. This condition is satisfied if ${(2N+2)}/{(N+2)}<p<2$.

\end{proof}

The iteration technique is easier in this case, and it lets us prove the following theorem

\begin{thm}\label{pro:p2}
Let $u,v\in C^{1,\alpha}_{loc}(\Omega)$ and assume that either $u$ or $v$ is a weak solution to \eqref{eq:Euler-Lagrange}, with
$ q\geq\max\,\{p-1,1\}$ and  $H$, $f(x,u), a(x,u)$ satisfying assumptions $(h_H)$ and $(h_p)$.  Assume  that $\overline{B(x_0,6\delta)}\subset\Omega'\subset \subset\Omega$ for some $\delta>0$ and that
\begin{equation}\label{eq:ekinotiziap}
-\Delta_p^Hv+a(x,v)H^{q}(\nabla v)-f(x,v)\leq -\Delta_p^Hu+a(x,u)H^q(\nabla u)-f(x,u),\quad u\leq v \quad  \text{in}\quad {B(x_0,5\delta)}.
\end{equation}
We distinguish the two cases:

\

\begin{itemize}
\item {Case $(a):p\geq 2$.} For all $s>1$, there exits $C>0$ such that
\begin{equation}\nonumber
\sup_{B(x_0,\delta)}(v-u)\leq C\|v-u\|_{L^s(B(x_0, 2\delta))},
\end{equation}
with $C=C(p,q, H,\delta, L, \|v\|_{L^{\infty}(\Omega')},  \|\nabla u\|_{L^{\infty}(\Omega')},  \|\nabla v\|_{L^{\infty}(\Omega')})$.

\

\item  { Case $(b):{(2N+2)}/{(N+2)}<p<2$.} Define
$${\bar t^\sharp}=\frac{2(p-1)}{2p-3}.$$
Then, for every $s>\bar t^\sharp/2$, there exists $C>0$ such that
\begin{equation}\nonumber
\sup_{B(x_0,\delta)}(v-u)\leq C\|v-u\|_{L^s(B(x_0, 2\delta))},
\end{equation}
with $C=C(p,q, H,\delta, L, \|v\|_{L^{\infty}(\Omega')},  \|\nabla u\|_{L^{\infty}(\Omega')},  \|\nabla v\|_{L^{\infty}(\Omega')})$.
\end{itemize}
\end{thm}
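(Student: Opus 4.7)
The proof of Theorem \ref{pro:p2} proceeds via Moser's iterative scheme in the same spirit as the First Estimate in the proof of Theorem \ref{t1}, but with positive test-function exponents and the direction of the differential inequality reversed. I set $w_\tau = v - u + \tau$ with $\tau > 0$, $\phi_\tau = \eta^2 w_\tau^\beta$ with $\beta \geq 1$ and $\eta \in C_c^1(B(x_0,5\delta))$, and plug $\phi_\tau$ into the weak form of \eqref{eq:ekinotiziap}. Because $u$ and $v$ now play swapped roles compared to \eqref{primaequazione}, the monotonicity inequality \eqref{eq:inequalities1} of Lemma \ref{stimedamascellianisotrope} again produces the correct-sign quantity $C_1\beta\int \rho|\nabla w_\tau|^2\eta^2 w_\tau^{\beta-1}$ on the left-hand side (recall $\beta>0$ and $w_\tau>0$). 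Estimating the cross term with $\nabla\eta$ via \eqref{eq:inequalities2} and weighted Young, and controlling the $a$ and $f$ contributions exactly as in \eqref{stimaa}--\eqref{stimaf} (using $(h_p)$(iii), the mean value theorem, and $q \geq \max\{p-1,1\}$), one arrives at the Caccioppoli-type bound
$$\int_\Omega \rho |\nabla w_\tau|^2 \eta^2 w_\tau^{\beta-1}\,dx \leq \frac{\hat C}{\beta}\left(1+\frac{1}{\beta}\right)\int_\Omega w_\tau^{\beta+1}(\eta^2 + \rho|\nabla\eta|^2)\,dx,$$
which is the exact analogue of \eqref{quartaequazione} valid on the range $\beta \geq 1$. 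Substituting $\tilde w_\tau = w_\tau^{(\beta+1)/2}$ and writing $r=\beta+1$ yields the corresponding analogue of \eqref{quintaequazione}.

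At this point the two cases split as in the proof of Theorem \ref{t1}. For Case $(a)$ with $p \geq 2$, I apply the weighted Sobolev--Poincar\'e inequality from Theorem \ref{Poincarepesata} to $\eta\tilde w_\tau \in H^{1,2}_{\rho,0}(\Omega)$ and exploit $\rho\in L^\infty(\Omega)$ to get
$$\|\eta\tilde w_\tau\|_{L^\nu(\Omega)}^2 \leq \hat C r^2 \|\tilde w_\tau(\eta+|\nabla\eta|)\|_{L^2(\Omega)}^2,\qquad 2<\nu<\bar 2_p,$$
giving the Moser gain factor $\chi=\nu/2>1$. For Case $(b)$ with $(2N+2)/(N+2)<p<2$, I follow \eqref{sobolevclassica}--\eqref{cosecose}: use the classical Sobolev inequality together with the pointwise bound $\rho^{-1}=(|\nabla u|+|\nabla v|)^{2-p}\leq C$, then insert H\"older with $\rho\in L^t(\Omega')$ (available from Theorem \ref{local1} with $t=(p-1)/((2-p)\theta)$, $\theta\in(0,1)$). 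The gain factor $\chi'=2^*/t^\#$ exceeds $1$ precisely under the hypothesis $p>(2N+2)/(N+2)$, and the same algebra explains the starting exponent threshold $s>\bar t^\sharp/2$ appearing in the statement.

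With a reverse-H\"older inequality
$$\|w_\tau\|_{L^{r\chi}(B(x_0,\rho'))} \leq (\hat C r^2)^{1/r}\|w_\tau\|_{L^r(B(x_0,\rho))},\qquad \rho'<\rho,$$
in hand for every $r \geq 1$ (with the appropriate $\chi$ in each case), the standard dyadic Moser iteration on a decreasing sequence of radii $\rho_k\downarrow \delta$, cutoffs $\eta_k$ satisfying $|\nabla\eta_k|\lesssim 2^k/\delta$, and exponents $r_k=s\chi^k$ converges because $\sum_k r_k^{-1}\log(\hat C r_k^2)<\infty$, yielding
$$\|w_\tau\|_{L^\infty(B(x_0,\delta))} \leq C\,\|w_\tau\|_{L^s(B(x_0,2\delta))}.$$
Letting $\tau\to 0^+$ finishes the proof. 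The main obstacle, as in Theorem \ref{t1}, is Case $(b)$: the H\"older step forces the precise restrictions $p>(2N+2)/(N+2)$ and $s>\bar t^\sharp/2$, and one must keep the dependence of the Caccioppoli constant on $\beta$ sufficiently transparent so that the iterated product converges. Once these are tracked, the iteration itself is routine and indeed simpler than the ``bridging'' iteration used in Theorem \ref{t1}, as the authors already remark.
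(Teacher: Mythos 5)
Your proposal follows the same route as the paper: test with $\phi_\tau=\eta^2 w_\tau^\beta$, $\beta>0$, in the reversed inequality, use \eqref{eq:inequalities1}--\eqref{eq:inequalities2} and the $(h_p)$ bounds to arrive at the analogue of \eqref{quartaequazione}, then pass through \eqref{norma} (case $p\ge2$, via Theorem \ref{Poincarepesata}) or \eqref{cosecose} (case $(2N+2)/(N+2)<p<2$, via classical Sobolev plus H\"older against $\rho\in L^t$) and run the forward Moser iteration. One small imprecision: you restrict to $\beta\geq1$ (hence $r\geq2$) in the Caccioppoli step but later invoke the reverse--H\"older inequality for every $r\geq1$; to obtain the statement for all $s>1$ in case $(a)$ (and all $s>\bar t^\sharp/2$ in case $(b)$) you should, as the paper does, allow any $\beta>0$ bounded away from $0$ (the factor $\frac{1}{\beta}(1+\frac{1}{\beta})$ is then absorbed into $\hat C$, which depends on the starting exponent $s$), or else add an interpolation step to descend from $L^2$ to $L^s$. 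With that adjustment the argument is the paper's own.
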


\begin{proof}
In this case, given $w_\tau:=v-u+\tau$, with $\tau$ positive constant, let us define the function $\varphi= \eta^2 w^{\beta}_\tau$ with $\beta>0$ and $\eta\in C^1_c(B(x_0,5\delta)).$ 

For $p\ge 2$, using $\varphi$ as a test function in \eqref{eq:ekinotiziap}, and repeating the same calculations as in the proof of the Theorem \ref{t1}, we get (see \eqref{norma})
\begin{equation}
\label{normaimportant}
\|\eta \tilde{w}_{\tau}\|^2_{L^{\nu}(\Omega)}\le  \hat C r^2\|\tilde{w}_{\tau}(\eta+|\nabla\eta|)\|^2_{L^2(\Omega)}.
\end{equation}

Now, since $\beta >0$, it follows that $r>1$. A classical Moser's iteration yields

\begin{equation}\nonumber
\sup_{B(x_0,\delta)}(v-u)\leq C\|v-u\|_{L^s(B(x_0, 2\delta))},
\end{equation}
where $C=C(p,q, H,\delta, L, \|v\|_{L^{\infty}(\Omega)},  \|\nabla u\|_{L^{\infty}(\Omega)},  \|\nabla v\|_{L^{\infty}(\Omega)})$ is a positve constant.

For $(2N+2)/(N+2)<p<2$, using the test function $\varphi$ in \eqref{eq:ekinotiziap}, following the same reasoning as in the proof of Theorem \ref{t1}, we get (see \eqref{cosecose})
\begin{equation}\label{cosecose2}
    \|\eta \tilde{w}_{\tau}\|^2_{L^{2^*}(\Omega)}\le  \hat C r^2\|\rho\|_{L^t(\Omega)}\|\tilde w_\tau(\eta+|\nabla \eta|)\|_{L^{t^\#}(\Omega)},
\end{equation}
where $t^\#=2(p-1)/(2p-3)$. Iterating \eqref{cosecose2}, for any $s>t^\#/2$, we deduce 
\begin{equation}\nonumber
\sup_{B(x_0,\delta)}(v-u)\leq C\|v-u\|_{L^s(B(x_0, 2\delta))},
\end{equation}
where $C=C(p,q, H,\delta, L, \|v\|_{L^{\infty}(\Omega)},  \|\nabla u\|_{L^{\infty}(\Omega)},  \|\nabla v\|_{L^{\infty}(\Omega)})$ is a positive constant.

\end{proof}

Now it is easy to deduce the proof of Theorem \ref{Harnack}.
\begin{proof}[Proof of Theorem \ref{Harnack}]
The proof readily follows applying Theorem \ref{t1} and Theorem~\ref{pro:p2}.
\end{proof}
\noindent As a corollary of the Harnack comparison inequality we have the
\begin{proof}[Proof of Theorem \ref{Principio forte}]
Let us set $w:v-u$. Define the set $$U_w=\{x\in \Omega\text{ : }w(x)=0\}.$$
By the continuity of $u$ and $v$ we deduce that $U_w$ is a closed set of $\Omega.$ On the other hand, from Theorem \ref{t1}, we have that $U_w$ is open. Then the thesis follows.
\end{proof}

\section{The weak Harnack inequality for the linearized operator}\label{Sec4}

In this section, we prove Theorem \ref{Harnacklinearizzo} and Theorem \ref{Strong2}. We outline the key steps required to establish a weak Harnack inequality for the linearized operator, including the derivation of essential estimates for applying Moser's iterative method. As already mentioned in Section \ref{Sec3}, for the detailed exposition of Moser's technique, we refer to \cite{DS2,SMS,Montoro}, avoiding unnecessary repetition.

\begin{thm}\label{Harnacklinearizzato}
Let $p\ge 2$ and $u\in C^{1,\alpha}_{loc}(\Omega)$ be a  weak solution to~\eqref{eq:Euler-Lagrange}. Assume that
$ q\geq\max\,\{p-1,1\}$ and suppose that $H$, $f(x,s), a(x,s)$ fulfill $(h_H)$ and $(h_p)$.
Assume that $\overline {B(x_0,6\delta)}\subset \Omega$ and put 
$$\frac{1}{\bar{ 2}_p}=\frac{1}{2}-\frac{1}{N}+\frac{p-2}{p-1}\,\frac{1}{N}.$$
If $v\in H^{1,2}_{\rho,loc}(\Omega)\cap L^{\infty}_{loc}(\Omega)$ is a nonnegative supersolution of \eqref{soluzionelinearizzato}, then for every $0<s<\frac{\bar{ 2}_p}{2}$, there exist $C=C(p,q,H,\delta,\| u\|_{L^{\infty}_{loc}( \Omega)},\|\nabla  u\|_{L^{\infty}_{loc}( \Omega)})>0$ such that 

\begin{equation}\label{Harnacklinear}
\delta^{-\frac{N}{s}} \|v\|_{L^s(B(x_0,2\delta))}\le C \left(\inf_{B(x_0,\delta)} v + \|g\|_{L^{\infty}(B(x_0,6\delta))}\right)
\end{equation}

where $g:=\delta ^{p}( f_{x_i}(x, u)- a_{x_i}(x,u)H^q(\nabla u))$.

If $(2N+2)/(N+2)<p<2$ the same result holds for $0<s<2^*/\bar t^\sharp$,
where $$\frac{1}{\bar t^\sharp}=\frac{2p-3}{2(p-1)}$$
and  $2^*$ is  the classical Sobolev exponent $2^*=2N/(N-2)$.
\end{thm}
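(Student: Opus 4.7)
The plan is to run Moser's iteration scheme for the linearized operator \eqref{linearizzato}, viewed as a linear elliptic equation in divergence form with weight $\rho = |\n u|^{p-2}$. The argument closely parallels the proof of Theorem \ref{t1}, but with the linearized ellipticity inequalities \eqref{eq:inequalities3}--\eqref{eq:inequalities4} replacing the nonlinear ones \eqref{eq:inequalities1}--\eqref{eq:inequalities2}. To handle the inhomogeneous forcing $(a_{x_i}(x,u)H^q(\n u)-f_{x_i}(x,u))$ in the definition of $L_u^i$, I would first pass to the shifted function $\bar v := v + k$ with $k$ chosen proportional to $\delta^{-p}\|g\|_{L^{\infty}(B(x_0,6\delta))}$, so that the forcing is pointwise dominated by $k\le\bar v$ and can be absorbed into right-hand side averages of $\bar v$.

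The key Caccioppoli-type estimate is obtained by testing the supersolution inequality $L_u^i(v,\varphi)\ge 0$ with $\varphi = \eta^2\bar v^\beta$, $\eta\in C_c^1(B(x_0,6\delta))$. The principal quadratic form $(p-1)H^{p-2}(\n u)\la\n H(\n u),\n \bar v\ra^2+H^{p-1}(\n u)\la D^2H(\n u)\n \bar v,\n \bar v\ra$ is bounded below by $C\rho|\n\bar v|^2$ via \eqref{eq:inequalities3}, while the cross term with $\n\eta$ is controlled via \eqref{eq:inequalities4} and a weighted Young's inequality. The first-order term is handled by splitting $H^{q-1}(\n u) = H^{(p-2)/2}(\n u)\cdot H^{q-p/2}(\n u)$, using $q\ge p/2$ so that the first factor matches the weight $\rho^{1/2}$ of the principal part and the second is uniformly bounded by $\|\n u\|_{L^\infty}$. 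The remaining zero-order terms are bounded using $(h_p)$ and the $L^\infty$ bounds on $u$ and $\n u$. The outcome is a Caccioppoli inequality of the form
\[
\int_\Omega \rho|\n\bar v|^2\eta^2\bar v^{\beta-1}\,dx \le \frac{\hat C}{|\beta|}\Bigl(1+\tfrac{1}{|\beta|}\Bigr)\int_\Omega \bar v^{\beta+1}(\eta^2+\rho|\n\eta|^2)\,dx,
\]
entirely analogous to \eqref{quartaequazione}.

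Setting $\tilde w = \bar v^{(\beta+1)/2}$ for $\beta\ne -1$, and combining the above with the weighted Poincar\'e--Sobolev inequality (Theorem \ref{Poincarepesata}) when $p\ge 2$, or with the classical Sobolev inequality and H\"older using $\rho\in L^t$ (Theorem \ref{local1}) when $(2N+2)/(N+2)<p<2$, I obtain a reverse-H\"older estimate of the form $\|\eta\tilde w\|_{L^\nu}^2\le \hat Cr^2\|\tilde w(\eta+|\n\eta|)\|_{L^2}^2$ for an appropriate $\nu > 2$. Iterating this over a sequence of shrinking balls $B(x_0,\delta(1+2^{-j}))$ with $\beta$ sufficiently negative yields the reverse-type estimate $\inf_{B(x_0,\delta)}\bar v \ge C\|\bar v\|_{L^{-s_1}(B(x_0,r))}$; the logarithmic estimate obtained by taking $\beta = -1$ and invoking the zero-mean Poincar\'e inequality of Remark \ref{zeromeans} then bridges $\|\bar v\|_{L^{-s_1}}$ and $\|\bar v\|_{L^{s}}$ averages in Trudinger's style (without appealing to John--Nirenberg), giving the weak Harnack inequality \eqref{Harnacklinear} after reverting to $v$.

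The main obstacle will be the careful bookkeeping of $\delta$ through the iteration, in particular identifying the correct scaling of the shift $k$ in terms of $\delta^{-p}\|g\|_{L^\infty}$ so that the forcing absorbs precisely; the factor $\delta^{-N/s}$ on the left of \eqref{Harnacklinear} then emerges naturally from the $L^s$-average over $B(x_0,2\delta)$. A secondary subtlety is the low-regularity range $(2N+2)/(N+2)<p<2$, where $\rho$ need not lie in $L^\infty$, forcing one to use H\"older with the integrability $\rho\in L^{(p-1)/((2-p)\theta)}$ from Theorem \ref{local1} and restricting the admissible exponent to $s<2^*/\bar t^\sharp$, which produces the dichotomy stated in the conclusion.
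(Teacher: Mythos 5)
Your plan is structurally the same as the paper's: test the supersolution inequality for the linearized operator with $\varphi=\eta^2\bar v_\tau^\beta$, invoke \eqref{eq:inequalities3}--\eqref{eq:inequalities4} to recover ellipticity, split $H^{q-1}$ to match the weight $\rho^{1/2}$, absorb the zero-order and forcing terms by a constant shift, derive a Caccioppoli inequality analogous to \eqref{quartaequazione}, and then run Moser iteration together with a log-estimate at $\beta=-1$ and the zero-mean Poincar\'e inequality in Trudinger's style, with the weighted Poincar\'e--Sobolev embedding for $p\ge 2$ and classical Sobolev plus $\rho\in L^t$ for $(2N+2)/(N+2)<p<2$. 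All of these ingredients are present in the paper's proof.

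There is, however, a genuine gap in how you handle the ``main obstacle'' that you yourself flag, and your proposed resolution is wrong as stated. You suggest shifting by $k\propto\delta^{-p}\|g\|_{L^\infty(B(x_0,6\delta))}=\|f_{x_i}-a_{x_i}H^q(\nabla u)\|_{L^\infty}$. This is indeed what is needed to dominate the forcing of the \emph{unscaled} operator, but then the Moser iteration, carried out in the original coordinates, would at best produce $\delta^{-N/s}\|v\|_{L^s(B(x_0,2\delta))}\le C\,(\inf_{B(x_0,\delta)}v+\delta^{-p}\|g\|_{L^\infty})$, which is strictly weaker than \eqref{Harnacklinear} for small $\delta$ (and is the regime where the inequality carries content). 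The paper avoids this by rescaling to the unit ball at the outset: setting $y=(x-x_0)/\delta$ and writing the supersolution inequality for $\tilde v(y)=v(x_0+\delta y)$ with rescaled coefficients $\overline a=\delta^{p-q}\tilde a$, $\overline f=\delta^p\tilde f$. The chain rule and the $q$-homogeneity of $H^q(\nabla\tilde u)=\delta^q H^q(\nabla u)$ then produce $\overline g=\delta^{-1}(\overline f_{y_i}-\overline a_{y_i}H^q(\nabla\tilde u))=\delta^p(f_{x_i}-a_{x_i}H^q(\nabla u))=g$, the forcing with the $\delta^p$ already built in, and the shift becomes simply $\overline v=\tilde v+\|g\|_{L^\infty}$. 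Moreover, after rescaling, the zero-order coefficient $\overline c=(\tilde c\,\tilde v+\overline g)\,\overline v^{-1}$ is uniformly bounded and the cutoff gradients are $O(1)$, so the Moser constants are $\delta$-uniform; scaling back then yields exactly the $\delta^{-N/s}$ on the left and $\|g\|_{L^\infty}$ (not $\delta^{-p}\|g\|_{L^\infty}$) on the right. Without this initial rescaling, the competing $\delta$-powers in the principal part, first-order term, and zero-order term do not balance, and your sketch does not explain how to reconcile them.
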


\begin{proof}
The function $v$ solves $L^i_u(v,\varphi)\ge 0$  for any nonnegative $\varphi\in H_\rho^{1,2}(\Omega)
,$ with support $\varphi\subset\subset \Omega$. We set $y:=\frac{x-x_0}{\delta}$ and $\tilde{\Omega} =\frac{\Omega-x_0}{\delta}$. Since $H$ is a $1$-homogeneous function, by \eqref{grad 0 omog} and \eqref{hess -1 omog}, recalling $\delta >0$ and rescaling \eqref{linearizzato} we get

\begin{equation}\label{supersol}
\begin{split}
    (p-1)&\int_{\tilde{\Omega}} H^{p-2}(\nabla \tilde u)\langle \nabla H(\nabla \tilde u),\nabla \tilde v \rangle \langle \nabla H(\nabla \tilde u),\nabla \varphi \rangle\,dy \\ 
    +&\int_{\tilde \Omega} H^{p-1}(\nabla \tilde u)\langle D^2H(\nabla \tilde u) \nabla \tilde v, \nabla \varphi \rangle\,dy +q \int_{\tilde \Omega} \delta^{p-q}H^{q-1}(\nabla \tilde u) \langle \nabla H(\nabla \tilde u),\nabla \tilde v\rangle \tilde a(y,\tilde u)\varphi\,dy \\
  - &\int_{\tilde \Omega} \left(  \delta^p\tilde f_s(y,\tilde u)-\delta ^{p-q}\tilde a_s(y,\tilde u)H^{q}(\nabla \tilde u)\right)\tilde v \varphi\,dy\\
 & -\int_{\tilde \Omega}\left( \delta ^{p-1}\tilde f_{y_i}(y,\tilde u)-\delta^{p-q-1}\tilde a_{y_i}(y,\tilde u)H^{q}(\nabla \tilde u)\right) \varphi\,dy \ge 0,
    \end{split}
\end{equation}

for any $\varphi \in H_{\rho}^{1,2}(\tilde \Omega),$ compactly supported in $\tilde \Omega$, and where $$ \tilde w(y,\tilde u(y))=w(x_0+\delta y, u(x_0+\delta y))$$ for every function $w$ in $\Omega$.

Now we set

\begin{equation}\label{totequazione}
    \begin{split}
        \overline{a}(y,\tilde u)&:=\delta^{p-q}\tilde a(y,\tilde u)\\
        \overline{f}(y,\tilde u)&:=\delta^{p}\tilde f(y,\tilde u)\\
        \tilde{c}&:= \overline  f_s(y,\tilde u)-\overline a_s(y,\tilde u)H^{q}(\nabla \tilde u)\\
        \overline{g}&:=\delta ^{-1}(\overline f_{y_i}(y,\tilde u)-\overline a_{y_i}(y,\tilde u)H^{q}(\nabla \tilde u)).
    \end{split}
\end{equation}

By \eqref{totequazione}, the inequality \eqref{supersol} becomes

\begin{equation}\label{supersol2}
\begin{split}
    (p-1)&\int_{\tilde{\Omega}} H^{p-2}(\nabla \tilde u)\langle \nabla H(\nabla \tilde u),\nabla \tilde v \rangle \langle \nabla H(\nabla \tilde u),\nabla \varphi \rangle\,dy \\ 
    +&\int_{\tilde \Omega} H^{p-1}(\nabla \tilde u)\langle D^2H(\nabla \tilde u) \nabla \tilde v, \nabla \varphi \rangle\,dy +q \int_{\tilde \Omega} H^{q-1}(\nabla \tilde u) \langle \nabla H(\nabla \tilde u),\nabla \tilde v\rangle \overline a(y,\tilde u)\varphi\,dy \\
  - &\int_{\tilde \Omega} \tilde c \tilde v \varphi\,dy -\int_{\tilde \Omega}\overline g \varphi\,dy \ge 0.
    \end{split}
\end{equation}

For $\varphi \in H_{\rho}^{1,2}(\tilde \Omega),$ we set $K:=\textit{supp } \varphi\subset \subset \tilde \Omega$ and we define the functions $\overline{v}:=\tilde v+ \|\overline g\|_{L^{\infty}(K)}$ and $\overline{c}:=(\tilde c \tilde v+\overline g)\overline v^{-1}$.
With these notations \eqref{supersol2} becomes :

\begin{equation}\label{newlinearizzato}
\begin{split}
    (p-1)&\int_{\tilde{\Omega}} H^{p-2}(\nabla \tilde u)\langle \nabla H(\nabla \tilde u),\nabla \overline v \rangle \langle \nabla H(\nabla \tilde u),\nabla \varphi \rangle \,dy \\ 
    +&\int_{\tilde \Omega} H^{p-1}(\nabla \tilde u)\langle D^2H(\nabla \tilde u) \nabla \overline v, \nabla \varphi \rangle  \,dy +q \int_{\tilde \Omega} H^{q-1}(\nabla \tilde u) \langle \nabla H(\nabla \tilde u),\nabla \overline v\rangle \overline a(y,\tilde u)\varphi \,dy \\
  - &\int_{\tilde \Omega} \overline{c} \overline v \varphi \,dy \ge 0.
    \end{split}
\end{equation}

We note that 
$$|\overline c|=\left|\frac{\tilde c \tilde v+\overline g}{\overline v}\right|\le \left|\frac{\tilde c \tilde v}{\tilde v+ \|\overline g\|_{L^{\infty}}}\right|+\left|\frac{\overline g}{\tilde v+ \|\overline g\|_{L^{\infty}}}\right| \le \|\tilde c\|_{L^{\infty}(K)}+1.$$

Since $\overline{c}\in L^{\infty}_{loc}(\tilde \Omega)$, we can develop an iterative Moser-type scheme \cite{MO} to prove a weak Harnack inequality for $\overline{v}$. For all details of the proof see \cite{CES0,DS2,SMS}, where the iterative Moser-type technique was developed in a similar setting.

Let us first note that in all the proof, we will work in a subdomain $\tilde \Omega '$. To simplify the notation we relabel it as $\tilde \Omega$ in the sequel.

 \textbf{First Estimate:} Let us consider the function  $\overline{v_{\tau}}=\overline v+\tau$ , where $\tau>0$ (at the end we will let $\tau\rightarrow 0$). Moreover, let us define the function $\varphi_\tau=\eta^2 \overline v_{\tau}^\beta$, with $\beta<0$ and $0\le \eta\in C^1_c(B(0,6))$. So
 \begin{equation}
     \nabla \varphi_\tau =2\eta \overline v_{\tau}^{\beta}\nabla \eta +\beta \eta ^2\overline v_{\tau}^{\beta-1}\nabla \overline v_{\tau}
 \end{equation}
Using $\varphi_\tau$ as a test function in \eqref{newlinearizzato} we get

\begin{equation}\label{stimetta}
    \begin{split}
    I_1+\cdot\cdot\cdot+I_6:=&\\
            2(p-1)&\int_{\tilde{\Omega}} H^{p-2}(\nabla \tilde u)\langle \nabla H(\nabla \tilde u),\nabla \overline v \rangle \langle \nabla H(\nabla \tilde u),\nabla \eta \rangle\eta \overline v_\tau^{\beta}\,dy \\ 
            \beta(p-1)&\int_{\tilde{\Omega}} H^{p-2}(\nabla \tilde u)\langle \nabla H(\nabla \tilde u),\nabla \overline v_\tau \rangle \langle \nabla H(\nabla \tilde u),\nabla \overline v_\tau \rangle \eta^2 \overline v_\tau^{\beta-1}\,dy
            \\
    +2&\int_{\tilde \Omega} H^{p-1}(\nabla \tilde u)\langle D^2H(\nabla \tilde u) \nabla \overline v, \nabla \eta \rangle \eta \overline v_\tau ^{\beta}\,dy \\+\beta &\int_{\tilde \Omega} H^{p-1}(\nabla \tilde u)\langle D^2H(\nabla \tilde u) \nabla \overline v_\tau, \nabla \overline v_\tau \rangle \eta ^2 \overline v_\tau ^{\beta-1}\,dy
    \\+q &\int_{\tilde \Omega} H^{q-1}(\nabla \tilde u) \langle \nabla H(\nabla \tilde u),\nabla \overline v\rangle \overline a(y,\tilde u)\eta ^2\overline v_\tau ^{\beta} \,dy\\
  - &\int_{\tilde \Omega} \overline{c} \overline v \eta^2 \overline v_\tau^{\beta} \,dy\ge 0
\end{split}
\end{equation}

\noindent {Case $(a)$: $p\geq 2$.}

We estimate the terms $-(I_2+I_4)$. Using Lemma \ref{stimedamascellianisotrope} (in particular see \eqref{eq:inequalities3}) we deduce 
\begin{equation}\label{stimasinistra}
    -(I_2+I_4)\ge \tilde C_1|\beta| \int_{\tilde\Omega}  H^{p-2}(\nabla \tilde u) |\nabla \overline v_\tau|^2 \eta ^2 \overline v_\tau^{\beta-1}\,dy.
\end{equation}

By \eqref{stimasinistra},  from Lemma \ref{stimedamascellianisotrope} (see \eqref{eq:inequalities4}) applied to terms $I_1+I_3$, by \eqref{eq:BddH} and since $\overline c\in L^{\infty}(\tilde \Omega)$, the estimate \eqref{stimetta} becomes

\begin{equation}\label{equazioneA}
\begin{split}
    &\tilde C_1|\beta| 
    \int_{\tilde\Omega}  H^{p-2}(\nabla \tilde u) |\nabla \overline v_\tau|^2 \eta ^2 \overline v_\tau^{\beta-1}\,dy  \\& \le 2\tilde C_2 \int_{\tilde\Omega} \tilde H^{p-2}(\nabla \tilde u) |\nabla \overline v_\tau| |\nabla \eta|\eta \overline v_\tau^{\beta}\,dy \\ &+C(q,\delta,p,H,\|\tilde u\|_{L^{\infty}(\tilde \Omega)})\int_{\tilde \Omega} H^{q-1}(\nabla \tilde u)\eta ^2 \overline v_\tau ^{\beta}|\nabla \overline v_\tau|\,dy \\ &+C(q,\delta,p,H,\| \tilde u\|_{L^{\infty}(\tilde \Omega)},\|\nabla \tilde u\|_{L^{\infty}(\tilde \Omega)})\int_{\tilde \Omega}\eta ^2 \overline v_\tau ^{\beta+1}\,dy
    \end{split}
\end{equation}

where $C(q,\delta,p,H,\|\tilde u\|_{L^{\infty}(\tilde \Omega)})$ and $C(q,\delta,p,H,\| \tilde u\|_{L^{\infty}(\tilde \Omega)},\|\nabla \tilde u\|_{L^{\infty}(\tilde \Omega)})$ are positive constants.

Now we set $\tilde \rho=H^{p-2}(\nabla \tilde u)$ and we note that our rescaled weight has the same integrability proprieties of the weight $|\nabla u|^{p-2}$.

Since $q\ge p-1$ and using the weighted Young's inequality, \eqref{equazioneA} becomes:

\begin{equation}
\begin{split}
     &\tilde C_1|\beta| 
    \int_{\tilde\Omega} \tilde \rho|\nabla \overline v_\tau|^2 \eta ^2 \overline v_\tau^{\beta-1}\,dy  \\& \le 2\tilde C_2 \int_{\tilde\Omega} \tilde \rho |\nabla \overline v_\tau| |\nabla \eta|\eta \overline v_\tau^{\beta}\,dy \\ &+C(q,\delta,p,H,\|\tilde u\|_{L^{\infty}(\tilde \Omega)})\int_{\tilde \Omega} H^{q-1}(\nabla \tilde u)\eta ^2 \overline v_\tau ^{\beta}|\nabla \overline v_\tau|\frac{H^{(p-2)/2}(\nabla \tilde u)}{H^{(p-2)/2}(\nabla \tilde u)}\,dy \\ &+C(q,\delta,p,H,\| \tilde u\|_{L^{\infty}(\tilde \Omega)},\|\nabla \tilde u\|_{L^{\infty}(\tilde \Omega)})\int_{\tilde \Omega}\eta ^2 \overline v_\tau ^{\beta+1}\,dy \\
    & \le \varepsilon \int_{\tilde\Omega} \tilde \rho |\nabla \overline v_\tau|^2 \eta^2 \overline v_\tau^{\beta-1}\,dy +\frac{C}{\varepsilon} \int_{\tilde\Omega} \tilde \rho  |\nabla \eta|^2 \overline v_\tau^{\beta+1}\,dy \\
    &+\frac{C}{\varepsilon}\int_{\tilde\Omega}  \eta^2 \overline v_\tau^{\beta+1}\,dy,
    \end{split}
\end{equation}
for some positive constant $C=C(q,p,\delta,H,\| \tilde u\|_{L^{\infty}(\tilde \Omega)},\|\nabla \tilde u\|_{L^{\infty}(\tilde \Omega)})$.

For $\epsilon =\tilde C_1|\beta|/2$, if we suppose that $|\beta|\ge c> 0$, up to redefining constants,  we obtain

\begin{equation}\label{stimaimportante}
    \int_{\tilde\Omega} \tilde \rho |\nabla \overline v_\tau|^2 \eta ^2 \overline v_\tau^{\beta-1} \,dy\le \frac{C}{|\beta|}\left(1+\frac{1}{|\beta|}\right)\int_{\tilde\Omega} \overline v_\tau^{\beta+1}(\eta ^2+\tilde \rho |\nabla \eta|^2)\,dy,
\end{equation}

where $C=C(q,p,\delta,H,\| \tilde u\|_{L^{\infty}(\tilde \Omega)},\|\nabla \tilde u\|_{L^{\infty}(\tilde \Omega)})$ is a positive constant. We point out that the estimate \eqref{stimaimportante} is equivalent to the estimate \eqref{quartaequazione}.

Let us now define 
\begin{equation}\label{deinizionewtaulinearizzato}
\tilde w_{\tau}=\begin{cases}
\overline v_{\tau}^{\frac{\beta +1}{2}} & \hbox{if}\;\;\;\beta\neq -1,\\
\log(\overline v_{\tau}) &\hbox{if}\;\;\;\beta =-1
\end{cases}
\end{equation}
and
\begin{equation}\label{definizionerciaociao}
r:=\beta +1.
\end{equation}
If $\beta\neq -1$, by \eqref{deinizionewtaulinearizzato}, the estimate \eqref{stimaimportante} becomes 

\begin{equation}\label{quintaequazionehy}
\begin{split}
&\int_{\tilde \Omega}\tilde \rho\eta^2|\nabla \tilde w_{\tau}|^2\,dy=\left (\dfrac{\beta +1}{2}\right )^2\int_{\tilde \Omega}\tilde \rho\eta^2|\nabla  \overline v_{\tau}|^2\overline v_\tau ^{\beta-1}\,dy\\& \le \frac{C}{|\beta|}\left(1+\frac{1}{|\beta|}\right)\left (\dfrac{\beta +1}{2}\right )^2\int_{\tilde \Omega}\tilde w_{\tau}^2(\eta ^2+\tilde\rho |\nabla \eta|)^2 \,dy.
\end{split}
\end{equation}

Now let us consider $2<\nu< \bar 2_p$.
Since $\eta \tilde w_\tau\in H^{1,2}_{\rho,0}(\tilde \Omega)$, from Theorem \ref{Poincarepesata}, and proceeding as in the proof of Theorem \ref{t1}, we obtain 
\begin{equation}
\label{normab1}
\|\eta \tilde{w}_{\tau}\|^2_{L^{\nu}(\tilde \Omega)}\le  \hat C r^2\|\tilde{w}_{\tau}(\eta+|\nabla\eta|)\|^2_{L^2(\tilde \Omega)},
\end{equation}
up to redefining the constant $\hat C$.

\textbf{Second Estimate:} To carry on the iterative scheme we need to show that there exists $r_0 > 0$ and a positive constant $C$ such that 
\begin{equation}\label{secondstima}
    \left(\int_{B(0,5/2)}\overline v_\tau^{r_0}\right)^{\frac{1}{r_0}}\le C\left(\int_{B(0,5/2)}\overline v_\tau^{-r_0}\right)^{\frac{1}{-r_0}}
\end{equation}

We define 
\begin{equation}
    \tilde w_\tau=\log (\overline v_\tau).
\end{equation}

Using \eqref{stimaimportante} with $\beta=-1$, we have 

\begin{equation}\label{betaugualemeno11}
    \int_\Omega \tilde \rho |\nabla \tilde w_{\tau}|^2\eta ^2  \,dx\le  C\int_\Omega (\eta^2+\tilde \rho|\nabla \eta|^2)\,dx.
\end{equation}

Supposing $\eta=1$ in $B(0,5)$, since $\tilde\rho\in L^\infty(\tilde \Omega)$ we have

\begin{equation}\label{peso11}
    \int_{B(0,5)} \tilde \rho |\nabla \tilde w_{\tau}|^2\le  C,
\end{equation}
where $C=C(q,p,\delta,H,\| \tilde u\|_{L^{\infty}(\tilde \Omega)},\|\nabla \tilde u\|_{L^{\infty}(\tilde \Omega)})$ is a positive constant.

Arguing exactly as in the proof of the Theorem \ref{t1} we can suppose that $\tilde w_\tau$ has zero mean in $B(0,5)$. Using Theorem \ref{Poincarepesata} (see Remark \ref{zeromeans}), we get 

\begin{equation}\label{peso2}
    \| \tilde w_\tau\|_{L^\nu(B(0,5))}\le C_{\tilde \rho} \int_{B(0,5)}\tilde \rho |\nabla \tilde w_\tau|^2 \le  C_{\tilde \rho}C,
\end{equation}

for every $2<\nu<\overline 2_p.$ 

Let us consider the following test function 
 \begin{equation}\label{filinearizzato}
 \phi=\eta^2\dfrac{1}{\overline v_\tau}(|\tilde{w}_\tau|^{\beta}+(2\beta)^{\beta})\qquad \beta\geq 1,
 \end{equation}
 with $\eta\geq 0$ and $\eta\in C_0^1(B(0,5))$. Therefore
\begin{equation}\label{derifilinearizzato}
\nabla\phi=\dfrac{2\eta}{\overline v_\tau}(|\tilde{w}_\tau|^{\beta}+(2\beta)^{\beta})\nabla\eta +\eta^2\dfrac{1}{\overline v_{\tau}^2}(\beta sign(\tilde{w}_\tau)|\tilde{w}_\tau|^{\beta -1}-|\tilde{w}_\tau|^{\beta}-(2\beta)^{\beta})\nabla \overline v_{\tau}
\end{equation}

We set $\tilde \rho=H^{p-2}(\nabla \tilde u)$. Substituting \eqref{filinearizzato} and \eqref{derifilinearizzato} in \eqref{newlinearizzato} we get

\begin{equation}\label{stimanuovolinearizzato2}
\begin{split}
&I_1+\cdot\cdot\cdot+I_6\\&:=
    (p-1)\int_{\tilde{\Omega}}\tilde \rho \langle \nabla H(\nabla \tilde u),\nabla \overline v \rangle \langle \nabla H(\nabla \tilde u),\nabla \eta \rangle \dfrac{2\eta}{\overline v_\tau}(|\tilde{w}_\tau|^{\beta}+(2\beta)^{\beta})\,dy \\&+
       (p-1)\int_{\tilde{\Omega}}\tilde \rho \langle \nabla H(\nabla \tilde u),\nabla \overline v \rangle \langle \nabla H(\nabla \tilde u),\nabla \overline v_\tau \rangle \eta^2\dfrac{1}{\overline v_{\tau}^2}(\beta sign(\tilde{w}_\tau)|\tilde{w}_\tau|^{\beta -1}-|\tilde{w}_\tau|^{\beta}-(2\beta)^{\beta}) \,dy \\ 
    &+\int_{\tilde \Omega} H^{p-1}(\nabla \tilde u)\langle D^2H(\nabla \tilde u) \nabla \overline v, \nabla \eta \rangle \dfrac{2\eta}{\overline v_\tau}(|\tilde{w}_\tau|^{\beta}+(2\beta)^{\beta}) \,dy
    \\ &+\int_{\tilde \Omega} H^{p-1}(\nabla \tilde u)\langle D^2H(\nabla \tilde u) \nabla \overline v, \nabla \overline v_\tau \rangle \eta^2\dfrac{1}{\overline v_{\tau}^2}(\beta sign(\tilde{w}_\tau)|\tilde{w}_\tau|^{\beta -1}-|\tilde{w}_\tau|^{\beta}-(2\beta)^{\beta})\,dy \\
    &+q \int_{\tilde \Omega} H^{q-1}(\nabla \tilde u) \langle \nabla H(\nabla \tilde u),\nabla \overline v\rangle \overline a(y,\tilde u)\eta^2\dfrac{1}{\overline v_\tau}(|\tilde{w}_\tau|^{\beta}+(2\beta)^{\beta}) \,dy\\
  &- \int_{\tilde \Omega} \overline{c} \overline v \eta^2\dfrac{1}{\overline v_\tau}(|\tilde{w}_\tau|^{\beta}+(2\beta)^{\beta} )\,dy\ge 0.
    \end{split}
\end{equation}

Now we estimate the terms $-(I_2+I_4)$. By \eqref{desi2} and from Lemma \ref{stimedamascellianisotrope} (see \eqref{eq:inequalities3}), we have

\begin{equation}\label{stimepezzi}
    \begin{split}
    -(I_2+I_4)&\ge \tilde C_1\beta\int_{\tilde \Omega} \tilde \rho |\nabla \overline v_\tau|^2 \frac{\eta ^2}{\overline v_\tau^2}|\tilde w_\tau|^{\beta-1}\,dy.
    \\& =\tilde C_1\beta\int_{\tilde \Omega} \tilde \rho |\nabla 
    \tilde w_\tau|^2 \eta ^2|\tilde w_\tau|^{\beta-1}\,dy.
    \end{split}
\end{equation}

From Lemma \ref{stimedamascellianisotrope} (see \eqref{eq:inequalities4}) applied on terms $I_1+I_3$, by \eqref{stimepezzi} and \eqref{eq:BddH}, we have 

\begin{equation}
\begin{split}
    &\tilde C_1\beta\int_{\tilde \Omega} \tilde \rho |\nabla 
    \tilde w_\tau|^2 \eta ^2|\tilde w_\tau|^{\beta-1}\,dy\\
    & \le 2\tilde C_2 \int_{\tilde \Omega} \tilde \rho|\nabla \tilde w_\tau| |\nabla \eta| \eta (|\tilde{w}_\tau|^{\beta}+(2\beta)^{\beta})\,dy \\
   &+C(p,q,H,\delta,\|\tilde u\|_{L^{\infty}(\tilde \Omega)}) \int_{\tilde \Omega}  H^{q-1}(\nabla \tilde u) |\nabla \tilde w_\tau|  \eta ^2 (|\tilde{w}_\tau|^{\beta}+(2\beta)^{\beta}) \,dy\\ 
   & +C(p,q,H,\delta,\|\tilde u\|_{L^{\infty}(\tilde \Omega)},\|\nabla \tilde u\|_{L^{\infty}(\tilde \Omega)})\int_{\tilde \Omega } \eta ^2 (|\tilde{w}_\tau|^{\beta}+(2\beta)^{\beta})\,dy,
    \end{split}
\end{equation}
where $C(p,q,H,\delta,\|\tilde u\|_{L^{\infty}(\tilde \Omega)})$ and $C(p,q,H,\delta,\|\tilde u\|_{L^{\infty}(\tilde \Omega)},\|\nabla \tilde u\|_{L^{\infty}(\tilde \Omega)})$ are positive constants.

From weighted Young's inequality and standard Young's inequality applied to first and second term on the right hand side of the previous inequality, since $q\ge p-1$, we have
\begin{equation}\label{eqtot}
    \begin{split}
    & \tilde C_1\beta\int_{\tilde \Omega} \tilde \rho |\nabla \tilde w_\tau|^2 \eta ^2|\tilde w_\tau|^{\beta-1} \,dy
     \\&\le  \epsilon \int_{\tilde \Omega} \tilde \rho |\nabla \tilde w_\tau|^2 \eta ^2|\tilde w_\tau|^{\beta-1} \,dy+\frac{C}{\epsilon}\int_{\tilde\Omega}\tilde \rho |\tilde w_\tau|^{\beta+1}|\nabla \eta|^2\,dy\\& + C\int_{\tilde\Omega} \tilde \rho (2\beta)^{\beta}|\nabla \eta|^2\,dy
     +C\int_{\tilde \Omega}\tilde \rho |\nabla \tilde w_\tau|^2\eta^2(2\beta)^{\beta}\,dy\\&+\frac{C}{\epsilon}\int_{\tilde \Omega} \tilde \rho\eta^2 |\tilde w_\tau|^{\beta+1}\,dy+C\int_{\tilde\Omega}\tilde \rho \eta^2(2\beta)^{\beta}\,dy
     \\ & +C\int_{\tilde \Omega } \eta ^2 (|\tilde{w}_\tau|^{\beta}+(2\beta)^{\beta})\,dy,
     \end{split}
\end{equation}
with $C=C(p,q,H,\delta,\|\tilde u\|_{L^{\infty}(\tilde \Omega)},\|\nabla \tilde u\|_{L^{\infty}(\tilde \Omega)})$ positive constant.
 
By \eqref{peso11}, for $\epsilon=\tilde C_1\beta/2$, the previous inequality \eqref{eqtot} becomes

\begin{eqnarray}\label{caos114}
&&\beta\int_{\tilde \Omega}\tilde \rho |\nabla \tilde w_{\tau}|^2\eta^2|\tilde{w}_\tau|^{\beta -1}\,dy\\\nonumber
&\leq&C\int_{\tilde \Omega}\tilde \rho(|\tilde{w}_\tau |^{\beta +1}+(2\beta)^{\beta})|\nabla\eta|^2\,dy+C\int_{\tilde \Omega}\eta^2(|\tilde{w}_\tau|^{\beta}+(2\beta)^{\beta}) \,dy\\\nonumber
&+&C\int_{\tilde\Omega}\tilde \rho\eta^2|\tilde{w}_\tau|^{\beta+1}\,dy+C(2\beta)^{\beta}.
\end{eqnarray}

By \eqref{caos114}, arguing as in the proof of Theorem \ref{t1}, we have 
\begin{equation}\label{barrafael14}
\int_{\tilde \Omega}\tilde \rho|\nabla \tilde w_{\tau}|^2\eta^2|\tilde{w}_\tau|^{\beta -1}\,dy\leq C\int_{\tilde \Omega}(|\tilde{w}_\tau|^{\beta +1}+(2\beta)^{\beta})(\eta^2+\tilde \rho |\nabla\eta|^2)\,dy.
\end{equation}
By \eqref{barrafael14}, it follows the estimate \eqref{secondstima}.

Hence the Moser’s iterative method applies. Therefore we get that there exist a positive constant $C$ such that 

\begin{equation}
    \|\overline v\|_{L^s(B(0,2))}\le C \inf_{B(0,1)} \overline v.
\end{equation}

Finally, scaling back to the coordinates $x=x_0+\delta y$ and recalling that $\overline{v}:=\tilde v+ \|\overline g\|_{L^{\infty}}$ we get 

\begin{equation*}
\delta^{-\frac{N}{s}} \|v\|_{L^s(B(x_0,2\delta))}\le C \left(\inf_{B(x_0,\delta)} v + \|g\|_{L^{\infty}(B(x_0,6\delta))}\right).
\end{equation*}

\item  {Case $(b):{(2N+2)}/{(N+2)}<p<2$.}

Arguiung exactly as in case $(a)$, we are able to get $\eqref{stimaimportante}$. Using \eqref{deinizionewtaulinearizzato}, we still obtain \eqref{quintaequazionehy} if $\beta \neq 1$ or \eqref{betaugualemeno11} if $\beta =-1$.

Using the classical Sobolev inequality (see \eqref{normab1}) we get
\begin{equation}\label{sobolevclassicahh}
\|\eta \tilde{w}_{\tau}\|^2_{L^{2^*}(\tilde \Omega)}\le  \hat C r^2\int_{\tilde \Omega} \tilde w_\tau^2(\eta^2+\tilde \rho|\nabla \eta|^2)\,dy.
\end{equation}
where $2^*$ is the classical Sobolev exponent

Now, from Theorem \ref{local1} (in particular see \eqref{stima peso locale}), we have that $\tilde \rho \in L^t(\tilde \Omega)$, with $\\t:=(p-1)/(2-p)\theta$, for $0<\theta<1$. Therefore 
\begin{equation}\label{cosecosehh}
    \|\eta \tilde{w}_{\tau}\|^2_{L^{2^*}(\tilde \Omega)}\le  \hat C r^2\|\tilde \rho\|_{L^t(\tilde \Omega)}\|\tilde w_\tau(\eta+|\nabla \eta|)\|_{L^{t^\#}(\tilde \Omega)},
\end{equation}

where $t^\#=2t/(t-1)$. Now we note that $\chi '=2^*/t^\#>1$, for ${(2N+2)}/{(N+2)}<p<2$. Then also in this case the Moser’s iterative method works and scaling back we get the thesis.

\end{proof}

\begin{rem}\label{remarkino}
    In the case $a_{x_i}\le 0$ and  $f_{x_i}\ge 0$, for any $i=1,...,n$, we have 
\begin{equation}\label{Harnacklinearizz}
\|v\|_{L^s(B(x_0,2\delta))}\le C \inf_{B(x_0,\delta)} v. 
\end{equation}
Indeed, in this case, the last term in \eqref{supersol} is nonpositive and therefore the introduction of the function $\overline g$ is unnecessary.
\end{rem}

If we consider a subsolution $v$ of \eqref{soluzionelinearizzato}, the iteration technique, which is simpler in this case, allows to prove the following  

\begin{thm}\label{Harnacklinearizzato2}
   Let $p\ge 2$ and $u\in C^{1,\alpha}_{loc}(\Omega)$ be a weak solution to~\eqref{eq:Euler-Lagrange}. Assume that
$ q\geq\max\,\{p-1,1\}$ and suppose that $H$, $f(x,s), a(x,s)$ fulfill $(h_H)$ and $(h_p)$.
Assume that $\overline {B(x_0,6\delta)}\subset \Omega$.
If $v\in H^{1,2}_{\rho,loc}(\Omega)\cap L^{\infty}_{loc}(\Omega)$ is a nonnegative subsolution of \eqref{soluzionelinearizzato}, then for all $s>1$, there exist $C=C(p,q,H,\delta,\| u\|_{L^{\infty}_{loc}( \Omega)},\|\nabla  u\|_{L^{\infty}_{loc}( \Omega)})>0$ such that 

\begin{equation}\label{Harnacklinearperilsup}
\sup_{B(x_0,\delta)} v\le C \left(\delta^{-\frac{N}{s}} \|v\|_{L^s(B(x_0,2\delta))} + \|g\|_{L^{\infty}(B(x_0,6\delta))}\right)
\end{equation}

where $g:=\delta ^{p}( f_{x_i}(x, u)- a_{x_i}(x,u)H^q(\nabla u))$.

If $(2N+2)/(N+2)<p<2$ the same result holds for $s>2^*/\bar t^\sharp$,
where $$\frac{1}{\bar t^\sharp}=\frac{2p-3}{2(p-1)}$$
and  $2^*$ is  the classical Sobolev exponent $2^*=2N/(N-2)$.
\end{thm}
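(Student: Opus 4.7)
The plan is to adapt the proof of Theorem \ref{Harnacklinearizzato} by replacing negative-exponent test functions with positive-exponent ones. This parallels exactly the relation between Theorem \ref{pro:p2} and Theorem \ref{t1} in Section \ref{Sec3}: because $v$ is a nonnegative subsolution, only the analogue of the ``first estimate'' is needed, and the logarithmic argument (the ``second estimate'' leading to \eqref{secondstima}) is not required. First, I would rescale by $y=(x-x_0)/\delta$ and introduce $\overline a, \overline f, \tilde c, \overline g$ exactly as in \eqref{totequazione}, obtaining the subsolution analogue of \eqref{supersol2} with the reverse inequality. Setting $\overline v:=\tilde v+\|\overline g\|_{L^\infty(K)}$ and $\overline c:=(\tilde c\tilde v+\overline g)\overline v^{-1}\in L^\infty_{loc}$ as in the proof of Theorem \ref{Harnacklinearizzato}, I reduce to a subsolution inequality of the form \eqref{newlinearizzato} with reversed sign and with a bounded zero-order coefficient $\overline c$.

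Next, I take as test function $\varphi_\tau=\eta^2\overline v_\tau^{\beta}$ with $\overline v_\tau=\overline v+\tau$, $\tau>0$, $\eta\in C^1_c(B(0,5))$, and now $\beta>0$. Substituting in the subsolution inequality, the sign of the leading $\beta$-term is already favorable via \eqref{eq:inequalities3}. The cutoff cross term is controlled through \eqref{eq:inequalities4} and a weighted Young's inequality; the first-order coefficient term is treated using $q\ge p-1$ together with the decomposition $H^{q-1}=H^{q-1-(p-2)/2}H^{(p-2)/2}$ and $\|\nabla\tilde u\|_\infty$, exactly as in the passage from \eqref{equazioneA} to \eqref{stimaimportante}; the $\overline c$-term is bounded by $\|\overline c\|_\infty\int\eta^2\overline v_\tau^{\beta+1}$. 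After absorbing gradient terms into the left-hand side, one arrives at the Caccioppoli-type estimate
\begin{equation*}
\int_{\tilde\Omega}\tilde\rho|\nabla\overline v_\tau|^2\eta^2\overline v_\tau^{\beta-1}\,dy\le \frac{C}{\beta}\Bigl(1+\frac{1}{\beta}\Bigr)\int_{\tilde\Omega}\overline v_\tau^{\beta+1}\bigl(\eta^2+\tilde\rho|\nabla\eta|^2\bigr)dy,
\end{equation*}
with $\tilde\rho=H^{p-2}(\nabla\tilde u)$ and $C$ depending only on the listed quantities. This is the analogue of \eqref{stimaimportante}.

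Setting $\tilde w_\tau=\overline v_\tau^{(\beta+1)/2}$ and $r=\beta+1>1$, the Caccioppoli inequality reads $\int_{\tilde\Omega}\tilde\rho\eta^2|\nabla\tilde w_\tau|^2\,dy\le C r^2\int_{\tilde\Omega}\tilde w_\tau^2(\eta^2+\tilde\rho|\nabla\eta|^2)\,dy$. Combining with the weighted Poincar\'e-Sobolev embedding (Theorem \ref{Poincarepesata}) in case $p\ge 2$ yields an $L^{\nu}$-estimate with $\nu<\bar 2_p$, hence iteration ratio $\chi=\nu/2>1$; in case $(2N+2)/(N+2)<p<2$, the classical Sobolev inequality together with $\tilde\rho\in L^t$ from \eqref{stima peso locale} gives iteration ratio $\chi'=2^*/\bar t^\sharp>1$, which is the threshold condition forcing $s>\bar t^\sharp/2$. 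Since $r>1$ from the outset, a standard Moser iteration with geometrically shrinking cutoffs between concentric balls, carried out in detail in \cite{DS2,SMS,Montoro}, produces $\sup_{B(0,1)}\overline v\le C\|\overline v\|_{L^s(B(0,2))}$ for every admissible $s$. Letting $\tau\to 0$ and scaling back via $x=x_0+\delta y$, recalling $\overline v=\tilde v+\|\overline g\|_{L^\infty}$, yields \eqref{Harnacklinearperilsup}.

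The main obstacle is essentially bookkeeping rather than anything genuinely new: carefully carrying the first-order term with $H^{q-1}(\nabla\tilde u)|\nabla\overline v_\tau|$ through the Caccioppoli derivation so that the absorption into $\int\tilde\rho|\nabla\overline v_\tau|^2\eta^2\overline v_\tau^{\beta-1}$ is legitimate, and verifying that the constants depend only on $p,q,H,\delta,\|u\|_{L^\infty_{loc}},\|\nabla u\|_{L^\infty_{loc}}$. Both issues are resolved precisely as in the supersolution case of Theorem \ref{Harnacklinearizzato}, with the added simplification that the ``log-type'' bridging estimate \eqref{barrafael14} is unnecessary here because $r>1$ already.
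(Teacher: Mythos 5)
Your proposal is correct and follows the same route as the paper: rescale by $\delta$, introduce $\overline g$, $\overline c$, $\overline v$ exactly as in the supersolution case, test the rescaled subsolution inequality with $\varphi_\tau=\eta^2\overline v_\tau^{\beta}$ for $\beta>0$, derive the Caccioppoli estimate (the analogue of \eqref{stimaimportante}), pass to $\tilde w_\tau=\overline v_\tau^{(\beta+1)/2}$ with $r=\beta+1>1$, apply the weighted Poincar\'e--Sobolev inequality (or classical Sobolev plus $\tilde\rho\in L^t$ when $p<2$), and iterate \`a la Moser with no need for the log/John--Nirenberg step since $r>1$. This is precisely what the paper does, with the paper simply citing the computations from Theorem \ref{Harnacklinearizzato} rather than repeating them.
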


\begin{proof}
As in the proof of the Theorem \ref{Harnacklinearizzato}, we set $y:=(x-x_o)/\delta$ and $\tilde \Omega:=(\Omega-x_0)/\delta$. Rescaling the linearized operator \eqref{linearizzato},  we have (see \eqref{newlinearizzato})

\begin{equation}\label{newlinearizzatosup}
\begin{split}
    (p-1)&\int_{\tilde{\Omega}} H^{p-2}(\nabla \tilde u)\langle \nabla H(\nabla \tilde u),\nabla \overline v \rangle \langle \nabla H(\nabla \tilde u),\nabla \varphi \rangle \,dy \\ 
    +&\int_{\tilde \Omega} H^{p-1}(\nabla \tilde u)\langle D^2H(\nabla \tilde u) \nabla \overline v, \nabla \varphi \rangle  \,dy +q \int_{\tilde \Omega} H^{q-1}(\nabla \tilde u) \langle \nabla H(\nabla \tilde u),\nabla \overline v\rangle \overline a(y,\tilde u)\varphi \,dy \\
  - &\int_{\tilde \Omega} \overline{c} \overline v \varphi \,dy \le 0.
    \end{split}
\end{equation}

Now, we set $\overline v_\tau:=\overline v+\tau$, where $\tau$ is a positive constant. Let us consider the test function $\varphi=\eta^2\overline v_\tau^\beta$, where $\beta >0$ and $0\le \eta\in C^1_c(B(0,6))$.

For $p\ge 2$, using $\varphi$ in \eqref{newlinearizzatosup}, by performing the same computations as in the proof of Theorem \ref{Harnacklinearizzato}, we get (see \eqref{normab1})
    \begin{equation}
\label{normab114}
\|\eta \tilde{w}_{\tau}\|^2_{L^{\nu}(\tilde \Omega)}\le  \hat C r^2\|\tilde{w}_{\tau}(\eta+|\nabla\eta|)\|^2_{L^2(\tilde \Omega)}.
\end{equation}

A standard application of Moser's iteration provides
\begin{equation*}
    \sup_{B(0,1)} \overline v\le C\|\overline v\|_{L^s(B(0,2))} .
\end{equation*}

Finally, scaling back to the coordinates $x=x_0+\delta y$ and recalling that $\overline{v}:=\tilde v+ \|\overline g\|_{L^{\infty}}$ we get 

\begin{equation*}
\sup_{B(x_0,\delta)} v\le C \left(\delta^{-\frac{N}{s}} \|v\|_{L^s(B(x_0,2\delta))} + \|g\|_{L^{\infty}(B(x_0,6\delta))}\right),
\end{equation*}
for all $s>1$.
For $(2N+2)/(N+2)<p<2$, using $\varphi$ as test function in \eqref{newlinearizzatosup}, as in the proof of the Theorem \ref{Harnacklinearizzato}, we get (see \eqref{cosecosehh})

\begin{equation*}
    \|\eta \tilde{w}_{\tau}\|^2_{L^{2^*}(\tilde \Omega)}\le  \hat C r^2\|\tilde \rho\|_{L^t(\tilde \Omega)}\|\tilde w_\tau(\eta+|\nabla \eta|)\|_{L^{t^\#}(\tilde \Omega)},
\end{equation*}

where $t^\#=2(p-1)/(2p-3)$.

A Moser iteration combined with a rescaling gives
\begin{equation*}
\sup_{B(x_0,\delta)} v\le C \left(\delta^{-\frac{N}{s}} \|v\|_{L^s(B(x_0,2\delta))} + \|g\|_{L^{\infty}(B(x_0,6\delta))}\right).
\end{equation*}
\end{proof}

At this stage, the proof of Theorem \ref{Harnacklinearizzo} follows directly.

\begin{proof}[Proof of Theorem \ref{Harnacklinearizzo}]
    The proof follows directly by Theorem \ref{Harnacklinearizzato} and Theorem \ref{Harnacklinearizzato2}.
\end{proof}

As a direct consequence of the Harnack inequality, we obtain the proof of Theorem \ref{Strong2}.

\begin{proof}[Proof of Theorem \ref{Strong2}]
    Let us define the set $$U_v=\{x\in \Omega':v(x)=0\}.$$ By the continuity of $v$, the set $U_v$ is a closed set in $\Omega'$. On the other hand, since $a_{x_i}\le 0$ and $f_{x_i}\ge 0$ for any $i=1,...,n$, from Theorem \ref{Harnacklinearizzo} (see Remark \ref{remarkino}) it follows that $U_v$ is open. Then the thesis follows.
\end{proof}

\begin{center}{\bf Acknowledgements}\end{center}  
  D. Vuono is partially supported by PRIN project P2022YFAJH\_003 (Italy): Linear and nonlinear PDEs; new directions and applications. 
  D. Vuono is partially supported also by Gruppo Nazionale per l’Analisi
 Matematica, la Probabilità e le loro Applicazioni (GNAMPA) of the Istituto Nazionale
 di Alta Matematica (INdAM).
\		
\begin{center}
 {\sc Data availability statement}\
All data generated or analyzed during this study are included in this published article.
\end{center}

   \end{document}